\newtheorem{theorem}{Theorem}[section]
\newtheorem{corollary}[theorem]{Corollary}
\newtheorem{lemma}[theorem]{Lemma}
\newtheorem{claim}{Claim}
\theoremstyle{definition}
\newtheorem{definition}[theorem]{Definition}
\newtheorem{example}[theorem]{Example}
\newtheorem{remark}[theorem]{Remark}
\numberwithin{equation}{section}
\title[Convergence theorem for solving generalized mixed equilibrium]{Convergence theorem for solving generalized mixed equilibrium problem and finding fixed point of a weak Bregman relatively nonexpansive mapping in Banach spaces}
\author[V. Darvish]{V. Darvish}
\address[Vahid Darvish]{School of Mathematics and Statistics, Nanjing University of Information Science and Technology,
Nanjing 210044, China}
\email{{\tt vahid.darvish@mail.com}}
\author[K. Jantakarn]{K. Jantakarn}
\address[Kittisak Jantakarn]{Department of Mathematics
Faculty of Science
Naresuan University
Phitsanulok 65000, Thailand}
\email{{\tt kittisakj61@nu.ac.th}}
\author[A. Kaewcharoen]{A. Kaewcharoen}
\address[Anchalee Kaewcharoen]{Department of Mathematics
Faculty of Science
Naresuan University
Phitsanulok 65000, Thailand}
\email{{\tt anchaleeka@nu.ac.th}}
\author[N. Biranvand]{N. Biranvand}
\address[Nader Biranvand]{Faculty of Sciences, Imam Ali University, Tehran, Iran.}
\email{{\tt nabiranvand@gmail.com}}
\keywords{Banach space, Bregman projection, Bregman distance, Weak Bregman relatively nonexpansive mapping, fixed point, generalized mixed equilibrium problem.}
\subjclass[2010]{47H09, 26B25, 47J25, 58C30}
\begin{document}

\begin{abstract}
In this paper, we study a new iterative method for finding the fixed point of a weak Bregman relatively nonexpansive mapping and the set of solutions of generalized mixed equilibrium problems in Banach spaces.
\end{abstract}

\maketitle

\section{Introduction}
Let $E$ be a real reflexive Banach space and $C$ a nonempty, closed and convex subset of $E$ and  $E^{*}$ be the dual space of $E$ and $f:E\to (-\infty,+\infty]$ be a proper, lower semi-continuous and convex function. We denote by $\text{dom} f$, the domain of $f$, that is the set $\{x\in E : f(x)<+\infty\}$. Let $x\in \text{int}(\text{dom} f)$, the subdifferential  of $f$ at $x$ is the convex set defined by 
\begin{equation*}
\partial f(x)=\{x^{*}\in E^{*} : f(x)+\langle x^{*},y-x\rangle \leq f(y), \forall y\in E\},
\end{equation*}
where the Fenchel conjugate of $f$ is the function $f^{*}: E^{*}\to (-\infty,+\infty]$ defined by 
$$f^{*}(x^{*})=\sup \{\langle x^{*},x\rangle -f(x): x\in E\}.$$

Equilibrium problems which were introduced by Blum and Oettli \cite{blu} and Noor and Oettli \cite{asl} in 1994 have had a great
impact and influence in the development of several branches of pure and applied sciences. It has been shown that the
equilibrium problem theory provides a novel and unified treatment of a wide class of problems which arise in economics,
finance, image reconstruction, ecology, transportation, network, elasticity and optimization. It has been shown (\cite{blu},\cite{asl}) that
equilibrium problems include variational inequalities, fixed point, Nash equilibrium and game theory as special cases. Hence
collectively, equilibrium problems cover a vast range of applications. Due to the nature of the equilibrium problems, it is
not possible to extend the projection and its variant forms for solving equilibrium problems. To overcome this drawback,
one usually uses the auxiliary principle technique. The main and basic idea in this technique is to consider an auxiliary
equilibrium problem related to the original problem and then show that the solution of the auxiliary problems is a solution
of the original problem. This technique has been used to suggest and analyze a number of iterative methods for solving
various classes of equilibrium problems and variational inequalities, see \cite{asl2} and the references therein.
Related to the equilibrium problems, we also have the problem of finding the fixed points of the nonexpansive mappings,
which is the subject of current interest in functional analysis. It is natural to construct a unified approach for these problems.
In this direction, several authors have introduced some iterative schemes for finding a common element of a set of the
solutions of the equilibrium problems and a set of the fixed points of finitely many nonexpansive mappings. 

Let $\Theta :\ C\times C\longrightarrow \Bbb R$ be a bifunction, where $\Bbb R$ is the set of real numbers, $\Psi :\ X\longrightarrow X^*$ be a nonlinear operator and $\varphi :\ C\longrightarrow \Bbb R$ be a real valued function. The generalized mixed equilibrium problem is to find an element $x\in C$ such that
\begin{equation}\label{eq11}
\Theta (x,y)+\langle \Psi x,y-x\rangle +\varphi (y)\geq \varphi (x)\ \forall y\in C.
\end{equation}
The set of solutions of the problem \eqref{eq11} is denoted by $GMEP(\Theta,\varphi ,\Psi )$, that is,
$$GMEP (\Theta,\varphi ,\Psi  )=\{x\in C: \ \Theta (x,y)+\langle \Psi x,y-x\rangle +\varphi (y)\geq \varphi (x)\ \forall y\in C\}.$$
Let $\Phi _i$, $i=1,2,...,N$ be $N$ bifunctions from $C\times C$ to $\Bbb R$,   $\varphi _i$, $i=1,2,...,N$ be $N$ real value functions from $C$ to $\Bbb R$ and   $\Psi _i$, $i=1,2,...,N$ be $N$ operators form $X$ to $X^*$. Solving a system of generalized mixed equilibrium problems means finding an element $x\in C$ such that
$x\in\cap _{i=1}^NGMEP(\Theta _i,\varphi _i,\Psi _i).$
In particular, if $\Psi =0$,   problem \eqref{eq11} is reduced to the following  mixed equilibrium problem, which is to find an element $x\in C$ such that
\begin{equation}\label{eq12}
\Theta (x,y)+\varphi (y)\geq \varphi (x)\ \forall y\in C.
\end{equation}
We denote by $MEP(\Theta )$ the set of solutions of   problem \eqref{eq12}.
If $\varphi =0$,   problem \eqref{eq11} is reduced to the following generalized equilibrium problem, which is to find an element $x\in C$ such that
\begin{equation}\label{eq13}
\Theta (x,y)+\langle \Psi x,y-x\rangle\geq 0\ \forall y\in C.
\end{equation}
The set of solutions of   problem \eqref{eq13} is denoted by $GEP(\Theta ,\Psi )$.
If $\Theta =0$,   problem \eqref{eq11} is reduced to the following mixed variational inequality of Browder type, which is to find an element $x\in C$ such that
\begin{equation}\label{eq14}
\langle \Psi x,y-x\rangle+\varphi (y)\geq \varphi (x)\ \forall y\in C.
\end{equation}
The set of solutions of the problem \eqref{eq14} is denoted by $MVI(C,\varphi ,\Psi )$.
If $\varphi =0$ and $\Psi =0$,  problem \eqref{eq11} is reduced to the following well known equilibrium problem,  which is to find an element $x\in C$ such that
\begin{equation}\label{eq15}
\Theta (x,y)\geq 0\ \forall y\in C.
\end{equation}
The set of solutions of   problem \eqref{eq15} is denoted by $EP(\Theta )$.

In \cite{rei}, Reich and Sabach proposed an algorithm for finding a common fixed point of finitely many Bregman strongly nonexpansive mappings $T_{i}:C\to C (i=1,2,\ldots, N)$ satisfying $\cap_{i=1}^{N}F(T_{i})\neq \emptyset$ in a reflexive Banach space $E$ as follows:
\begin{eqnarray*}
x_{0}&\in & E, \text{chosen arbitrarily,}\\
y_{n}^{i}&=&T_{i}(x_{n}+e_{n}^{i}),\\
C_{n}^{i}&=&\{z\in E : D_{f}(z,y_{n}^{i})\leq D_{f}(z,x_{n}+e_{n}^{i})\},\\
C_{n}&=&\cap_{i=1}^{N}C_{n}^{i},\\
Q_{n}^{i}&=&\{z\in E : \langle \nabla f(x_{0})-\nabla f(x_{n}), z-x_{n}\rangle\leq 0\},\\
x_{n+1}&=&proj_{C_{n}\cap Q_{n}}^{f}(x_{0}), \ \ \forall n\geq0,
\end{eqnarray*}
and
\begin{eqnarray*}
x_{0}&\in & E,\\
C_{0}^{i}&=&E, i=1,2,\ldots,N,\\
y_{n}^{i}&=&T_{i}(\nu_{n}+e_{n}^{i}),\\
C_{n+1}^{i}&=&\{z\in C_{n}^{i} : D_{f}(z,y_{n}^{i})\leq D_{f}(z,x_{n}+e_{n}^{i})\},\\
C_{n+1}&=&\cap_{i=1}^{N}C_{n+1}^{i},\\
x_{n+1}&=&proj_{C_{n+1}}(x_{0}), \ \ \forall n\geq0,
\end{eqnarray*}
where $proj_{C}^{f}$ is the Bregman projection with respect to $f$ from E onto  a closed and convex subset $C$ of $E$. They proved that the sequence $\{x_{n}\}$ converges strongly to a common fixed point of $\{T_{i}\}_{i=1}^{N}$.

The authors of \cite{aga} introduced the following algorithm:
\begin{align}
&x_{0}=x\in C \ \ \ \ \  \text{chosen arbitrarily},\nonumber\\
&z_{n}=\nabla f^{*}(\beta_{n}\nabla f(T(x_{n}))+(1-\beta_{n})\nabla f(x_{n})), \nonumber\\
&y_{n}=\nabla f^{*}(\alpha_{n} \nabla f(x_{0})+(1-\alpha_{n})\nabla f(z_{n})),\nonumber\\
&u_{n}=Res_{H}^{f}(y_{n}),\nonumber\\
&C_{n}=\{z\in C_{n-1}\cap Q_{n-1}: D_{f}(z,u_{n})  \nonumber \\
& \hspace{5 cm}\leq \alpha_{n}D_{f}(z,x_{0})+(1-\alpha_{n})D_{f}(z,x_{n})\}, \nonumber\\
&Q_{n}=\{z\in C_{n-1}\cap Q_{n-1}: \langle\nabla f(x_{0})-\nabla f(x_{n}),z-x_{n}\rangle\leq 0\},\nonumber\\
&x_{n+1}= proj^{f}_{C_{n}\cap Q_{n}}x_{0}, \  \forall n\geq 0, \label{e1.6}
\end{align}
where $H$ is an equilibrium bifunction and $T$ is a weak Bregman relatively nonexpansive mapping. They proved the above sequence  converges strongly to the point $proj_{F(T)\cap EP(H)}x_{0}$.

In this paper, motivated by the above algorithms, we study  the following iterative scheme:
\begin{align}
&z_{n}=\nabla f^{*}(\beta_{n}\nabla f(T(x_{n}))+(1-\beta_{n})\nabla f(x_{n})), \nonumber\\
&y_{n}=\nabla f^{*}(\alpha_{n} \nabla f(x_{0})+(1-\alpha_{n})\nabla f(z_{n})),\nonumber\\
&u_{n}=Res_{\Theta,\varphi,\Psi}^{f}(y_{n}),\nonumber\\
&C_{n+1}=\{z\in C_{n}: D_{f}(z,u_{n})\leq \alpha_{n}D_{f}(z,x_{0})+(1-\alpha_{n})D_{f}(z,x_{n})\}, \nonumber\\
&Q_{n+1}=\{z\in Q_{n}: \langle\nabla f(x_{0})-\nabla f(x_{n}),z-x_{n}\rangle\leq 0\},\nonumber\\
&x_{n+1}= proj^{f}_{C_{n+1}\cap Q_{n+1}}x_{0}, \ \forall n\geq 0,\label{eqw}
\end{align}
where $T$ is a weak Bregman relatively nonexpansive mapping, $\varphi :C\to \mathbb{R}$ is real-valued function, $\Psi: C\to E^{*}$ is continuous monotone mapping, $\Theta: C\times C\to \mathbb{R}$ is equilibrium bifunction. We will prove that the sequence $\{x_{n}\}$ defined in (\ref{eqw}) converges strongly to the point $proj_{F(T)\cap GMEP(\Theta)}x_{0}$. 
\section{Preliminaries}
For any $x\in \text{int}(\text{dom} f)$, the right-hand derivative of $f$ at $x$ in the derivation $y\in E$ is defined by
$$f^{'}(x,y):=\lim _{t\searrow0} \frac{f(x+ty)-f(x)}{t}.$$
The function $f$ is called G\^{a}teaux differentiable at $x$ if $\lim_{t\searrow0} \frac{f(x+ty)-f(x)}{t}$ exists for all $y\in E$. In this case, $f^{'}(x,y)$ coincides with $\nabla f(x)$, the value of the gradient ($\nabla f)$ of $f$ at $x$. The function $f$ is called G\^{a}teaux differentiable if it is G\^{a}teaux differentiable for any $x\in \text{int}(\text{dom} f)$ and $f$ is called Fr\'{e}chet differentiable at $x$ if this limit is attain uniformly for all $y$ which satisfies $\|y\|=1$. The function $f$ is uniformly Fr\'{e}chet differentiable on a subset $C$ of $E$ if the limit is attained uniformly for any $x\in C$ and $\|y\|=1$. It is known that if $f$ is G\^{a}teaux differentiable (resp. Fr\'{e}chet differentiable) on $\text{int}(\text{dom} f)$, then $f$ is continuous and its G\^{a}teaux derivative $\nabla f$ is norm-to-weak$^*$ continuous (resp. continuous) on $\text{int} (\text{dom}f)$ (see \cite{bon}).

\begin{definition}\cite{bre}
Let $f: E\to (-\infty,+\infty]$ be a G\^{a}teaux differentiable function. The function $D_{f}: \text{dom} f\times \text{int}(\text{dom} f)\to [0,+\infty)$ defined as follows:
\begin{equation}\label{1}
D_{f}(x,y):=f(x)-f(y)-\langle \nabla f(y),x-y\rangle
\end{equation}
is called the Bregman distance with respect to $f$.
\end{definition}

\begin{remark}\cite{rei15}
The Bregman distance has the following properties:
\begin{enumerate}
\item
the three-point identity, for any $x\in \text{dom} f$ and $y,z\in \text{int}(\text{dom} f)$,
$$D_{f}(x,y)+D_{f}(y,z)-D_{f}(x,z)=\langle \nabla f(z)-\nabla f(y),x-y\rangle;$$
\item
the four-point identity, for any $y,w\in \text{dom}f$ and $x,z\in\text{int}(\text{dom}f)$,
$$D_{f}(y,x)-D_{f}(y,z)-D_{f}(w,x)+D_{f}(w,z)=\langle \nabla f(z)-\nabla f(x),y-w\rangle.$$
\end{enumerate}
\end{remark}

The Legendre function $f:E\to (-\infty,+\infty]$ is defined in \cite{bau}. It is well known that in reflexive spaces, $f$ is Legendre function if and only if it satisfies the following conditions:

($L_{1}$) The interior of the domain of $f$, $\text{int}(\text{dom} f)$, is nonempty, $f$ is G\^{a}teaux differentiable on $\text{int}(\text{dom} f)$ and $\text{dom} f=\text{int}( \text{dom} f)$;

($L_{2}$) The interior of the domain of $f^{*}$, $\text{int}( \text{dom} f^{*})$, is nonempty, $f^{*}$ is G\^{a}teaux differentiable on $\text{int}(\text {dom} f^{*})$ and $\text{dom} f^{*}= \text{int}( \text{dom} f^{*})$.

\noindent Since $E$ is reflexive, we know that $(\partial f)^{-1}=\partial f^{*}$ (see \cite{bon}). This , with ($L_{1}$) and ($L_{2}$), imply the following equalities: 
$$ \nabla f=(\nabla f^{*})^{-1},  \ \ \ \text{ran} \nabla f=\text{dom} \nabla f^{*}=\text{int}(\text{dom} f^{*})$$
 and $$\text {ran} \nabla f^{*}=\text{dom}(\nabla f)=\text {int}(\text{dom} f),$$ where $\text{ran}\nabla f$ denotes the range of $\nabla f$. 

When the subdifferential of $f$ is single-valued, it coincides with the gradient $\partial f=\nabla f$, \cite{phe}. By Bauschke
et al \cite{bau} the conditions ($L_{1}$) and ($L_{2}$) also yields that the function $f$ and $f^{*}$ are strictly convex on the interior of their respective domains.\\
If $E$ is a smooth and strictly convex Banach space, then an important and interesting Legendre function is $f(x):=\frac{1}{p}\|x\|^{p} (1<p<\infty).$ In this case the gradient $\nabla f$ of $f$ coincides with the generalized duality mapping of $E$, i.e., $\nabla f=J_{p} (1<p<\infty).$ In particular, $\nabla f=I$, the identity mapping in Hilbert spaces. From now on we assume that the convex function $f:E\to (-\infty, \infty]$ is Legendre. In connection with Legendre functions, see also the recent paper \cite{ree}.
\begin{definition}\cite{bre}
Let $f:E\to (-\infty,+\infty]$ be a convex and G\^{a}teaux differentiable function. The Bregman projection of $x\in \text{int}(\text{dom} f)$ onto the nonempty, closed and convex subset $C\subset \text{dom} f$ is the necessary unique vector $proj_{C}^{f}(x)\in C$ satisfying 
$$D_{f}(proj_{C}^{f}(x),x)=\inf\{D_{f}(y,x) : y\in C\}.$$
\end{definition}
\begin{remark}
If $E$ is a smooth and strictly convex Banach space and $f(x)=\|x\|^{2}$ for all $x\in E$, then we have that $\nabla f(x)=2Jx$ for all $x\in E$, where $J$ is the normalized duality mapping from $E$ in to $2^{E^{*}}$, and hence $D_{f}(x,y)$ reduced to $\phi(x,y)=\|x\|^{2}-2\langle Jy, x\rangle +\|y\|^{2}$, for all $x,y\in E$, which is the Lyapunov function introduced by Alber \cite{alb} and Bregman projection $P_{C}^{f}(x)$ reduces to the generalized projection $\Pi_{C}(x)$ which is defined by
$$\phi(\Pi_{C}(x),x)=\min_{ y\in C} \phi(y,x).$$
If $E=H$, a Hilbert space, $J$ is  the identity mapping and hence Bregman projection $P_{C}^{f}(x)$ reduced to the metric projection of $H$ onto $C$, $P_{C}(x)$.
\end{remark}

\begin{definition}\cite{but4,but2}
Let $f:E\to(-\infty,+\infty]$ be a convex and G\^{a}teaux differentiable function. $f$ is called:
\begin{enumerate}
\item
\textit{totally convex} at $x\in \text{int}(\text{dom} f)$ if its modulus of total convexity at $x$, that is, the function $\nu_{f}:\text{int}(\text{dom} f)\times[0,+\infty)\to[0,+\infty)$ defined by 
$$\nu_{f}(x,t):=\inf\{D_{f}(y,x): y\in \text{dom}f, \|y-x\|=t\},$$
is positive whenever $t>0$;
\item
totally convex if it is totally convex at every point $x\in \text{int}(\text{dom} f)$;
\item
totally convex on bounded sets if $\nu_{f}(B,t)$ is positive for any nonempty bounded subset $B$ of $E$ and $t>0$, where the modulus of total convexity of the function $f$ on the set $B$ is the function $\nu_{f}:\text{int}(\text{dom} f)\times [0,+\infty)\to [0,+\infty)$ defined by 
$$\nu_{f}(B,t):=\inf\{\nu_{f}(x,t): x\in B\cap \text{dom} f\}.$$
\end{enumerate}
\end{definition}
The set $lev_{\leq}^{f}(r)=\{x\in E: f (x)\leq r\}$ for some $r\in\mathbb{R}$ is called a sublevel of $f$.
\begin{definition}\cite{but2,rei}\label{deff}
The function $f:\ E\to(-\infty,+\infty]$ is called;
\begin{enumerate}
\item
\textit{cofinite} if $\text{dom} f^{*}=E^{*}$;
\item
\textit{coercive} \cite{hir} if the sublevel set of $f$ is bounded; equivalently,
$$\lim_{\|x\|\to+\infty}f(x)=+\infty;$$
\item
\textit{strongly coercive} if $\lim_{\|x\|\to+\infty}\frac{f(x)}{\|x\|}=+\infty$;
\item
\textit{sequentially consistent} if for any two sequences $\{x_{n}\}$ and $\{y_{n}\}$ in $E$ such that $\{x_{n}\}$ is bounded,
$$\lim_{n\to\infty} D_{f}(y_{n},x_{n})=0\Rightarrow \lim_{n\to\infty}\|y_{n}-x_{n}\|=0.$$
\end{enumerate}
\end{definition}
\begin{lemma}\cite{but}\label{lem6}
The function $f$ is totally convex on bounded subsets if and only if it is sequentially consistent.
\end{lemma}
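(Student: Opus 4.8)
The plan is to prove both implications by contradiction, with the engine of the argument being a monotonicity property of the modulus of total convexity that follows from the convexity of $D_{f}(\cdot,x)$ in its first argument. First I would establish the auxiliary scaling estimate that for every $x\in\text{int}(\text{dom}f)$, every $t>0$ and every $\lambda\geq 1$,
$$\nu_{f}(x,\lambda t)\geq\lambda\,\nu_{f}(x,t).$$
To see this, fix $y\in\text{dom}f$ with $\|y-x\|=\lambda t$ and set $z:=x+\tfrac{1}{\lambda}(y-x)$, so that $\|z-x\|=t$ and $z=\tfrac{1}{\lambda}y+(1-\tfrac{1}{\lambda})x$ is a convex combination of $y$ and $x$. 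Since $D_{f}(\cdot,x)$ is convex and $D_{f}(x,x)=0$, we get $D_{f}(z,x)\leq\tfrac{1}{\lambda}D_{f}(y,x)$, hence $D_{f}(y,x)\geq\lambda\,\nu_{f}(x,t)$; taking the infimum over all such $y$ gives the claim. In particular $\nu_{f}(x,\cdot)$ is nondecreasing, so $\|y-x\|\geq\varepsilon$ forces $D_{f}(y,x)\geq\nu_{f}(x,\|y-x\|)\geq\nu_{f}(x,\varepsilon)$.

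For the forward implication, suppose $f$ is totally convex on bounded sets and let $\{x_{n}\}$ be bounded with $D_{f}(y_{n},x_{n})\to 0$; choose a bounded set $B$ containing $\{x_{n}\}$. If $\|y_{n}-x_{n}\|\not\to 0$, then along some subsequence $\|y_{n_{k}}-x_{n_{k}}\|\geq\varepsilon>0$, and the monotonicity above yields
$$D_{f}(y_{n_{k}},x_{n_{k}})\geq\nu_{f}(x_{n_{k}},\varepsilon)\geq\nu_{f}(B,\varepsilon)>0,$$
contradicting $D_{f}(y_{n},x_{n})\to 0$. Hence $\|y_{n}-x_{n}\|\to 0$, and $f$ is sequentially consistent.

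For the converse, suppose $f$ is sequentially consistent but fails to be totally convex on some bounded set, so that $\nu_{f}(B,t)=0$ for some bounded $B$ and some $t>0$. Unwinding the two infima, there are points $x_{n}\in B\cap\text{dom}f$ with $\nu_{f}(x_{n},t)<1/n$ and then companions $y_{n}$ with $\|y_{n}-x_{n}\|=t$ and $D_{f}(y_{n},x_{n})<\nu_{f}(x_{n},t)+1/n<2/n$. Thus $\{x_{n}\}$ is bounded and $D_{f}(y_{n},x_{n})\to 0$, yet $\|y_{n}-x_{n}\|=t>0$ for all $n$, contradicting sequential consistency. This proves total convexity on bounded sets.

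The only genuinely nontrivial ingredient is the scaling estimate $\nu_{f}(x,\lambda t)\geq\lambda\,\nu_{f}(x,t)$; once its consequence, the monotonicity of $\nu_{f}(x,\cdot)$, is in hand, both directions are short contradiction arguments built directly on the definitions. The point to be careful about is that in the forward direction the lower bound must be uniform in $n$, which is precisely why one passes from the pointwise modulus $\nu_{f}(x_{n},\cdot)$ to the set-modulus $\nu_{f}(B,\cdot)$ and invokes total convexity on the bounded set $B$ rather than at individual points.
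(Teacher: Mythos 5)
Your proof is correct: the scaling estimate $\nu_{f}(x,\lambda t)\geq\lambda\,\nu_{f}(x,t)$ follows exactly as you say from the convexity of $D_{f}(\cdot,x)$ (which is $f$ plus an affine function), and both contradiction arguments then go through, with the passage from the pointwise modulus to the set modulus $\nu_{f}(B,\cdot)$ handled correctly. The paper itself gives no proof — Lemma \ref{lem6} is quoted from Butnariu and Resmerita — and your argument is essentially the standard one from that source, so there is nothing further to compare.
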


\begin{lemma}\cite[Proposition 2.3]{rei}
If $f:E\to(-\infty,+\infty]$ is Fr\'{e}chet differentiable and totally convex, then $f$ is cofinite.
\end{lemma}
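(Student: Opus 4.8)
The goal is to show $\operatorname{dom} f^{*}=E^{*}$, i.e. that $f^{*}(x^{*})=\sup_{x\in E}\big(\langle x^{*},x\rangle-f(x)\big)<+\infty$ for every $x^{*}\in E^{*}$. The plan is to reduce this to a growth (coercivity) estimate for $f$ and then to supply that estimate from total convexity together with Fr\'{e}chet differentiability. First I would fix a base point $x_{0}\in\operatorname{int}(\operatorname{dom} f)$ (nonempty since $f$ is G\^{a}teaux, indeed Fr\'{e}chet, differentiable there) and rewrite the conjugate using the Bregman representation $f(x)=f(x_{0})+\langle\nabla f(x_{0}),x-x_{0}\rangle+D_{f}(x,x_{0})$. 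Writing $w^{*}:=x^{*}-\nabla f(x_{0})$ and $c_{0}:=\langle x^{*},x_{0}\rangle-f(x_{0})$, this gives
\[
\langle x^{*},x\rangle-f(x)=\langle w^{*},x-x_{0}\rangle+c_{0}-D_{f}(x,x_{0})\le \|w^{*}\|\,\|x-x_{0}\|+c_{0}-\nu_{f}\big(x_{0},\|x-x_{0}\|\big),
\]
where the inequality uses the modulus bound $D_{f}(x,x_{0})\ge\nu_{f}(x_{0},\|x-x_{0}\|)$. Taking the supremum over $x$ (equivalently over $t=\|x-x_{0}\|\ge0$) reduces the claim to
\[
\sup_{t\ge0}\big(\|w^{*}\|\,t-\nu_{f}(x_{0},t)\big)<+\infty ,
\]
which holds for every $x^{*}$ precisely when $\nu_{f}(x_{0},t)/t\to+\infty$ as $t\to+\infty$.

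So the whole matter comes down to proving that the modulus of total convexity grows super-linearly. From total convexity I get for free that $\nu_{f}(x_{0},t)>0$ for $t>0$, and, by the standard properties of the modulus (Butnariu--Iusem), that $t\mapsto \nu_{f}(x_{0},t)/t$ is nondecreasing on $(0,+\infty)$; this already yields a linear lower bound $\nu_{f}(x_{0},t)\ge a\,t$ for $t$ bounded away from $0$, with some $a>0$. The hard part---and the step I expect to be the main obstacle---is upgrading this linear bound to a genuinely super-linear one, since a linear estimate only controls $f^{*}(x^{*})$ for $\|w^{*}\|<a$ and leaves the large-$x^{*}$ regime open. This is exactly the point at which Fr\'{e}chet differentiability of $f$ has to be used in an essential way rather than cosmetically.

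To exploit Fr\'{e}chet differentiability I would pursue one of two dual routes. The first is to invoke the duality between differentiability of $f$ and convexity of $f^{*}$: Fr\'{e}chet differentiability of $f$ forces $f^{*}$ to be sufficiently convex (strictly, and in the relevant quantitative sense) on its domain, which is what matches the super-linear control of $\nu_{f}(x_{0},\cdot)$ needed above; combined with Lemma~\ref{lem6} (total convexity on bounded sets $\Leftrightarrow$ sequential consistency) this is meant to pin down the growth of $D_{f}(\cdot,x_{0})$. The second, more geometric route is to prove that $\nabla f$ is surjective onto $E^{*}$: for fixed $x^{*}$ the function $g:=f-\langle x^{*},\cdot\rangle$ is convex, lower semicontinuous and, by the same modulus estimate, coercive, so in the reflexive space $E$ it attains its infimum at some $\bar{x}$; then $\nabla f(\bar{x})=x^{*}$, and using the Legendre identity $\operatorname{ran}\nabla f=\operatorname{int}(\operatorname{dom} f^{*})$ recorded above one concludes $x^{*}\in\operatorname{int}(\operatorname{dom} f^{*})\subseteq\operatorname{dom} f^{*}$. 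Either way the crux is the coercivity/super-linearity estimate; once it is in hand, finiteness of $f^{*}$ on all of $E^{*}$ follows immediately from the displayed bound, establishing that $f$ is cofinite.
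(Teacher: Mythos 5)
First, note that the paper does not actually prove this lemma: it is imported verbatim from \cite[Proposition 2.3]{rei}, so there is no in-paper argument to measure your proposal against. Your reduction is nevertheless the natural one and is carried out correctly: writing $f(x)=f(x_{0})+\langle\nabla f(x_{0}),x-x_{0}\rangle+D_{f}(x,x_{0})$ and bounding $D_{f}(x,x_{0})\geq\nu_{f}(x_{0},\|x-x_{0}\|)$ shows that cofiniteness would follow once $\nu_{f}(x_{0},t)/t\to+\infty$ as $t\to+\infty$, i.e.\ once $f$ is supercoercive. You also correctly observe that total convexity alone only yields a linear lower bound (via the monotonicity of $t\mapsto\nu_{f}(x_{0},t)/t$), and you flag the upgrade to superlinear growth as ``the main obstacle.''

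That obstacle is never overcome, and this is a genuine gap rather than a routine omission. Your route (b) is circular: coercivity of $g=f-\langle x^{*},\cdot\rangle$ for \emph{every} $x^{*}\in E^{*}$ is exactly the superlinear growth you are trying to establish, so appealing to ``the same modulus estimate'' there assumes the conclusion; route (a) never materializes into an argument. More seriously, the missing step cannot be supplied from the hypotheses as stated with this paper's definitions: take $E=\mathbb{R}$ and $f(x)=\sqrt{1+x^{2}}$. This $f$ is $C^{\infty}$ (hence Fr\'{e}chet differentiable) and has $f''>0$ everywhere, so it is totally convex (indeed totally convex on bounded sets), yet $\nu_{f}(0,t)/t=(\sqrt{1+t^{2}}-1)/t\to 1$ and $\text{dom}\,f^{*}=[-1,1]\neq\mathbb{R}$, so $f$ is not cofinite. (Even adding the paper's standing Legendre assumption does not help: prescribing $f^{*}(p)=-\log(1-p^{2})$ on $(-1,1)$ and taking $f=f^{**}$ gives a Legendre, smooth, totally convex $f$ on $\mathbb{R}$ with $f(x)/|x|\to 1$.) Consequently no argument along your lines --- or any other --- can close the proof without importing an additional hypothesis such as supercoercivity of $f$, under which the conclusion is immediate from your first display and total convexity is not even needed. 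If you intend to rely on this lemma, you should verify the exact standing assumptions in \cite{rei} rather than the bare statement reproduced here.
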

\begin{lemma}\cite{but}\label{2.2.f}
Let $f:E\to(-\infty,+\infty]$ be a convex function whose domain contains at least two points.Then the following statements hold:
\begin{enumerate}
\item
$f$ is sequentially consistent if and only if it is totally convex on bounded sets;

\item
If $f$ is lower semicontinuous, then $f$ is sequentially consistent if and only if it is uniformly convex on bounded sets;

\item
If $f$ is uniformly strictly convex on bounded sets, then it is sequentially consistent and the converse implication holds when $f$ is lower semicontinuous, Fr\'{e}chet differentiable on its domain and Fr\'{e}chet derivative $\nabla f$ is uniformly continuous on bounded sets.

\end{enumerate}

\end{lemma}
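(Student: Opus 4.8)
The plan is to reduce every implication to a comparison between the four associated moduli together with a standard ``extract a bad subsequence'' contradiction, since all four notions in the statement are quantitative and are read off from infima of $D_{f}$ or of midpoint defects.

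For part (1), the engine is the elementary segment estimate: if $\|y-x\|=cs$ with $c\ge 1$ and $z=x+\tfrac1c(y-x)$, then convexity of $f$ along $[x,y]$ gives $D_{f}(y,x)\ge c\,D_{f}(z,x)\ge c\,\nu_{f}(x,s)$, so that $\nu_{f}(x,cs)\ge c\,\nu_{f}(x,s)$ and $t\mapsto \nu_{f}(x,t)/t$ is nondecreasing. Assuming total convexity on bounded sets and $D_{f}(y_{n},x_{n})\to 0$ with $\{x_{n}\}$ bounded, I would suppose $\|y_{n}-x_{n}\|\not\to 0$, pass to a subsequence with $\|y_{n_{k}}-x_{n_{k}}\|\ge\varepsilon$, enclose $\{x_{n}\}$ in a bounded set $B$, and bound $D_{f}(y_{n_{k}},x_{n_{k}})\ge\nu_{f}(x_{n_{k}},\varepsilon)\ge\nu_{f}(B,\varepsilon)>0$, a contradiction. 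Conversely, if total convexity on bounded sets fails there are $B$, $t>0$ and $x_{n}\in B$ with $\nu_{f}(x_{n},t)\to 0$; choosing $y_{n}$ with $\|y_{n}-x_{n}\|=t$ that nearly attain each infimum gives $D_{f}(y_{n},x_{n})\to 0$ while $\|y_{n}-x_{n}\|=t$, contradicting sequential consistency. This is precisely the content of Lemma \ref{lem6}.

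For parts (2) and (3) I would compare moduli through two exact relations. First, the midpoint identity $f(x)+f(y)-2f\!\left(\tfrac{x+y}{2}\right)=D_{f}\!\left(x,\tfrac{x+y}{2}\right)+D_{f}\!\left(y,\tfrac{x+y}{2}\right)$, valid because $\nabla f$ exists at the midpoint, together with the one-sided bound $f(x)+f(y)-2f\!\left(\tfrac{x+y}{2}\right)\le D_{f}(y,x)$, sandwiches the modulus of uniform convexity between $2\nu_{f}(\cdot,t/2)$ and $\nu_{f}(\cdot,t)$ on (slightly enlarged) bounded sets; positivity of either modulus on every bounded set then forces positivity of the other, which is the equivalence in (2), with lower semicontinuity invoked to pass to the limit in the defining infima and to treat the general $\lambda\in(0,1)$ form of the uniform-convexity modulus rather than only midpoints. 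For (3), the forward implication is immediate since the uniform strict convexity defect directly majorizes the total-convexity modulus; for the converse I would use that $\nabla f$ is uniformly continuous on bounded sets to control $\langle\nabla f(x)-\nabla f(y),x-y\rangle$, and Fr\'{e}chet differentiability together with lower semicontinuity to recover the strict-convexity modulus from sequential consistency.

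The step I expect to be the main obstacle is the bookkeeping between these moduli, because they are genuinely different objects: the modulus of total convexity pins a base point and is built from the asymmetric Bregman distance, whereas the moduli of uniform and uniform strict convexity are symmetric and built from function values. Converting positivity of one into positivity of the other uniformly in the base point --- with the quantifiers over the bounded set $B$ and the gap $t$ matched across the two formulations --- is the delicate point, and it is exactly here that the lower semicontinuity, Fr\'{e}chet differentiability, and uniform continuity of $\nabla f$ hypotheses in (2) and (3) are genuinely needed, rather than in any single inequality.
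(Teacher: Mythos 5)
First, a remark on the comparison itself: the paper does not prove Lemma \ref{2.2.f} at all --- it is imported verbatim from Butnariu and Resmerita \cite{but} --- so there is no in-paper argument to measure your proposal against. Judged on its own terms, your treatment of item (1) is correct and complete: the homogeneity estimate $\nu_f(x,ct)\ge c\,\nu_f(x,t)$ for $c\ge 1$ (which you verify correctly via the subgradient inequality along the segment), the resulting monotonicity of $\nu_f(x,\cdot)$, and the two ``extract a bad subsequence'' arguments constitute the standard proof of the equivalence; note that this item is already recorded separately in the paper as Lemma \ref{lem6}.

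For items (2) and (3), however, the proposal is a plan rather than a proof, and the missing parts are exactly the hard ones. Your sandwich $2\nu_f\bigl(\tfrac{x+y}{2},\tfrac{t}{2}\bigr)\le f(x)+f(y)-2f\bigl(\tfrac{x+y}{2}\bigr)\le D_f(y,x)$ is valid (the left inequality survives even without differentiability at the midpoint, since $f'(m,v)+f'(m,-v)\ge 0$ turns your midpoint identity into an inequality pointing the right way), and it does reduce (2) to comparing the midpoint convexity defect with the general-$\lambda$ modulus of uniform convexity on bounded sets; but that reduction --- where lower semicontinuity is actually used --- is only asserted, not carried out. In (3) the situation is worse: you never fix the definition of ``uniformly strictly convex on bounded sets,'' so the claim that its defect ``directly majorizes'' $\nu_f$ cannot be checked. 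If the definition is phrased through $\langle\nabla f(x)-\nabla f(y),x-y\rangle$, the three-point identity only controls the symmetric sum $D_f(x,y)+D_f(y,x)$, and extracting a lower bound on the single term $D_f(y,x)$ requires something like the integral representation $D_f(y,x)=\int_0^1\langle\nabla f(x+s(y-x))-\nabla f(x),\,y-x\rangle\,ds$ applied to the rescaled increments; nothing of this sort appears. Likewise, the converse implication in (3) is rendered only as a list of the hypotheses (lower semicontinuity, Fr\'{e}chet differentiability, uniform continuity of $\nabla f$) with no mechanism for how they produce the strict-convexity modulus from sequential consistency. Since you yourself identify the moduli bookkeeping as ``the main obstacle,'' the honest verdict is: (1) is proved, (2)--(3) are not. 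To close the gap, carry out the quantifier-matching comparisons of the four moduli as in \cite{but} and \cite{but2}, where these steps are done explicitly.
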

\begin{lemma}\cite[Proposition 2.1]{rei4}\label{lem7}
Let $f:E\to\mathbb{R}$ be uniformly Fr\'{e}chet differentiable and bounded on bounded subsets of $E$. Then $\nabla f$ is uniformly continuous on bounded subsets of $E$ from the strong topology of $E$ to the strong topology of $E^{*}$.
\end{lemma}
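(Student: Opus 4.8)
The plan is to reduce everything to the quantitative form of uniform Fr\'echet differentiability together with a single algebraic identity for the difference quotient, and then to balance two competing parameters at the end. Fix a nonempty bounded set $B\subseteq E$; it suffices to prove that $\nabla f$ is uniformly continuous on $B$. Enlarge $B$ to a closed ball $B'$ containing the $1$-neighborhood of $B$, so that $x+th\in B'$ whenever $x\in B$, $\|h\|=1$ and $0\le t\le 1$. The hypothesis of uniform Fr\'echet differentiability (on the bounded set $B'$) then reads: for every $\varepsilon>0$ there is $\delta\in(0,1]$ with
\[
\Bigl|\frac{f(u+th)-f(u)}{t}-\langle\nabla f(u),h\rangle\Bigr|\le\varepsilon
\qquad(u\in B',\ \|h\|=1,\ 0<t\le\delta).
\]

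First I would record two preliminary consequences. Fixing one admissible pair $(\varepsilon_0,\delta_0)$ and using that $f$ is bounded on $B'$, the displayed estimate with $t=\delta_0$ gives $|\langle\nabla f(u),h\rangle|\le \frac{2\sup_{B'}|f|}{\delta_0}+\varepsilon_0=:M$ for all $u\in B'$ and unit $h$, so $\nabla f$ is bounded on $B'$ by $M$. Since $B'$ is convex, the mean value inequality then shows that $f$ is $M$-Lipschitz on $B'$. (Alternatively, since $f$ is assumed convex and Legendre throughout, one may invoke directly that a convex function bounded on bounded sets is Lipschitz on bounded sets.)

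The core step is an identity valid for any $x,y\in B$, unit $h$ and $0<t\le\delta$: setting $r_x:=\frac{f(x+th)-f(x)}{t}-\langle\nabla f(x),h\rangle$ and likewise $r_y$, so that $|r_x|,|r_y|\le\varepsilon$, one obtains
\[
\langle\nabla f(x)-\nabla f(y),h\rangle=\frac1t\bigl[(f(x+th)-f(y+th))-(f(x)-f(y))\bigr]-r_x+r_y.
\]
Bounding the bracket by $2M\|x-y\|$ via the Lipschitz estimate (both $x+th$ and $y+th$ lie in $B'$) yields $\langle\nabla f(x)-\nabla f(y),h\rangle\le \frac{2M}{t}\|x-y\|+2\varepsilon$, and since the right-hand side is independent of $h$, taking the supremum over unit $h$ gives $\|\nabla f(x)-\nabla f(y)\|\le \frac{2M}{t}\|x-y\|+2\varepsilon$.

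Finally I would couple the parameters. Given $\eta>0$, choose $\varepsilon=\eta/4$ with its corresponding $\delta$, then freeze $t=\delta$; requiring $\|x-y\|\le \delta\eta/(4M)$ forces $\frac{2M}{t}\|x-y\|\le \eta/2$, whence $\|\nabla f(x)-\nabla f(y)\|\le\eta$. This is precisely strong-to-strong uniform continuity of $\nabla f$ on $B$. The one delicate point, and the main obstacle, is the factor $1/t$ in front of the bracket: it blows up as $t\to0$, so one cannot send $t\to0$ before controlling $\|x-y\|$. The resolution is exactly the order of quantifiers above—first fix $t=\delta$ small enough to make the Fr\'echet remainder small, and only then shrink $\|x-y\|$ relative to this now-fixed $t$.
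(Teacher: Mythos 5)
The paper does not prove this lemma at all: it is imported verbatim as Proposition 2.1 of the cited Reich--Sabach paper, so there is no in-text argument to compare yours against. Your proof is correct and is essentially the standard quantitative argument behind that proposition: the difference-quotient identity
\[
\langle\nabla f(x)-\nabla f(y),h\rangle=\tfrac1t\bigl[(f(x+th)-f(y+th))-(f(x)-f(y))\bigr]-r_x+r_y
\]
checks out, the Lipschitz bound on $f$ obtained from the boundedness of $\nabla f$ on an enlarged ball is legitimate (boundedness of $\nabla f$ itself following from the uniform difference-quotient estimate at a fixed $t=\delta_0$ plus boundedness of $f$), and the order of quantifiers at the end --- first fix $t=\delta$ to control the remainders, only then shrink $\|x-y\|$ relative to $\delta$ --- is exactly the point that makes the $1/t$ factor harmless. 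Two cosmetic remarks: the bound $|\langle\nabla f(u),h\rangle|\le 2\sup_{B'}|f|/\delta_0+\varepsilon_0$ should really take the supremum of $|f|$ over the slightly larger bounded set $B'+\delta_0 B_E$, since $u+\delta_0 h$ can leave $B'$ (harmless, as $f$ is bounded on all bounded sets); and your parenthetical alternative invoking convexity is unnecessary --- the mean value route already works and keeps the argument independent of convexity. As written, your proof supplies a complete justification for a statement the paper only cites.
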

\begin{lemma}\cite[Lemma 3.1]{rei}\label{aust}
Let $f:E\to \mathbb{R}$ be a G\^{a}teaux differentiable and totally convex function. If $x_{0}\in E$ and the sequence $\{D_{f}(x_{n},x_{0})\}$ is bounded, then the sequence $\{x_{n}\}$ is also bounded.
\end{lemma}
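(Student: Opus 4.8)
The plan is to work directly with the modulus of total convexity $\nu_f$ and the defining property of the Bregman distance. First I would record the pointwise lower bound that is immediate from the definition of $\nu_f$: for every index $n$ with $x_n\neq x_0$,
$$D_f(x_n,x_0)\geq \nu_f\bigl(x_0,\|x_n-x_0\|\bigr).$$
Consequently, if $\{D_f(x_n,x_0)\}$ is bounded above by some $M>0$, then $\nu_f(x_0,\|x_n-x_0\|)\leq M$ for all $n$. The whole lemma therefore reduces to a coercivity property of the scalar function $t\mapsto\nu_f(x_0,t)$: once I know this function tends to $+\infty$ as $t\to+\infty$, the uniform bound on $\nu_f(x_0,\|x_n-x_0\|)$ forces the norms $\|x_n-x_0\|$ to remain bounded, and hence $\{x_n\}$ is bounded.

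The key step, which I expect to be the main obstacle, is the growth estimate
$$\nu_f(x_0,s)\geq \frac{s}{t}\,\nu_f(x_0,t)\qquad(s\geq t>0).$$
To prove it, I would fix any $y$ with $\|y-x_0\|=s$ and set $z=(1-\tfrac{t}{s})x_0+\tfrac{t}{s}y$, so that $\|z-x_0\|=t$. The map $D_f(\cdot,x_0)$ is convex, since by definition it equals $f$ minus an affine functional, and $D_f(x_0,x_0)=0$; convexity then yields $D_f(z,x_0)\leq \tfrac{t}{s}D_f(y,x_0)$, so that $D_f(y,x_0)\geq \tfrac{s}{t}D_f(z,x_0)\geq \tfrac{s}{t}\nu_f(x_0,t)$. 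Taking the infimum over all $y$ with $\|y-x_0\|=s$ gives the displayed inequality. Since $f:E\to\mathbb{R}$ is real-valued, $\mathrm{dom}\,f=E$ and every convex combination stays in the domain, so no domain issue arises in this construction.

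With the growth estimate available, the conclusion follows quickly. Because $f$ is totally convex (in particular at $x_0$), the number $\nu_f(x_0,1)$ is strictly positive; applying the estimate with $t=1$ gives $\nu_f(x_0,s)\geq s\,\nu_f(x_0,1)$ for all $s\geq 1$, so $\nu_f(x_0,s)\to+\infty$ as $s\to+\infty$. I would then argue by contradiction: were $\{x_n\}$ unbounded, there would be a subsequence with $t_k:=\|x_{n_k}-x_0\|\to+\infty$, whence $D_f(x_{n_k},x_0)\geq \nu_f(x_0,t_k)\to+\infty$, contradicting the boundedness of $\{D_f(x_n,x_0)\}$. Therefore $\{x_n\}$ is bounded, which is exactly the assertion of the lemma.
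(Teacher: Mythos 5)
Your proof is correct; the paper itself only cites this lemma from Reich and Sabach without giving a proof, and your argument is essentially the standard one from that source: bound $\nu_f(x_0,\|x_n-x_0\|)$ above by $D_f(x_n,x_0)$ and invoke the growth property $\nu_f(x_0,s)\geq \frac{s}{t}\nu_f(x_0,t)$ for $s\geq t>0$ to conclude that $\nu_f(x_0,\cdot)$ is coercive. The only difference is that you prove that growth property inline (correctly, via convexity of $D_f(\cdot,x_0)$ and $D_f(x_0,x_0)=0$), whereas the original quotes it from Butnariu and Iusem; nothing is missing.
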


\begin{lemma}\label{ecap}\cite[Proposition 2.2]{rei}
Let $f:E\to\mathbb{R}$ be a G\^{a}teaux differentiable and totally convex function, $x_{0}\in E$ and let $C$ be a nonempty, closed convex subset of $E$. Suppose that the sequence $\{x_{n}\}$ is bounded and any weak subsequential limit of $\{x_{n}\}$ belongs to $C$. If $D_{f}(x_{n},x_{0})\leq D_{f}(proj^{f}_{C}x_{0},x_{0})$ for any $n\in N$, then $\{x_{n}\}_{n=1}^{\infty}$ converges to $proj^{f}_{C}x_{0}$.
\end{lemma}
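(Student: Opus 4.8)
The plan is to set $\bar{x} := proj^{f}_{C}x_{0}$ and to prove two facts, namely that the whole sequence converges weakly to $\bar{x}$ and that $D_{f}(x_{n},\bar{x})\to 0$; total convexity will then upgrade the latter to strong convergence. First I would exploit weak lower semicontinuity. Since $f:E\to\mathbb{R}$ is G\^ateaux differentiable on $E$ it is continuous, so the convex function $x\mapsto D_{f}(x,x_{0})=f(x)-f(x_{0})-\langle\nabla f(x_{0}),x-x_{0}\rangle$ is weakly lower semicontinuous. As $\{x_{n}\}$ is bounded and $E$ is reflexive, any subsequence has a weakly convergent sub-subsequence; let $x_{n_{k}}\rightharpoonup v$. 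By hypothesis $v\in C$, and weak lower semicontinuity together with the hypothesis $D_{f}(x_{n},x_{0})\le D_{f}(\bar{x},x_{0})$ gives
\[
D_{f}(v,x_{0})\le \liminf_{k\to\infty} D_{f}(x_{n_{k}},x_{0})\le D_{f}(\bar{x},x_{0}).
\]
Because $\bar{x}$ is the (unique) minimizer of $D_{f}(\cdot,x_{0})$ over $C$ and $v\in C$, we also have $D_{f}(\bar{x},x_{0})\le D_{f}(v,x_{0})$; uniqueness of the Bregman projection then forces $v=\bar{x}$. Since every weak subsequential limit equals $\bar{x}$, the bounded sequence satisfies $x_{n}\rightharpoonup\bar{x}$.

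Next I would establish $\lim_{n\to\infty}D_{f}(x_{n},x_{0})=D_{f}(\bar{x},x_{0})$: the hypothesis yields $\limsup_{n}D_{f}(x_{n},x_{0})\le D_{f}(\bar{x},x_{0})$, while weak lower semicontinuity applied to the full sequence (now that $x_{n}\rightharpoonup\bar{x}$) gives the matching lower bound for the $\liminf$. I would then invoke the three-point identity with $x=x_{n}$, $y=\bar{x}$, $z=x_{0}$, which rearranges to
\[
D_{f}(x_{n},\bar{x})=D_{f}(x_{n},x_{0})-D_{f}(\bar{x},x_{0})+\langle\nabla f(x_{0})-\nabla f(\bar{x}),\,x_{n}-\bar{x}\rangle.
\]
On the right-hand side the first two terms tend to $0$ by the preceding step, and the pairing tends to $0$ because $x_{n}-\bar{x}\rightharpoonup 0$ is tested against the fixed functional $\nabla f(x_{0})-\nabla f(\bar{x})\in E^{*}$. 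Hence $D_{f}(x_{n},\bar{x})\to 0$.

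The final and most delicate step is to convert $D_{f}(x_{n},\bar{x})\to 0$ into $\|x_{n}-\bar{x}\|\to 0$. Here I would use total convexity of $f$ at $\bar{x}$ together with boundedness of $\{x_{n}\}$, so that $\{\|x_{n}-\bar{x}\|\}$ is bounded. Arguing by contradiction, if $\|x_{n}-\bar{x}\|\not\to 0$ I pass to a subsequence with $\|x_{n_{j}}-\bar{x}\|\ge\varepsilon>0$; from $D_{f}(x_{n_{j}},\bar{x})\ge\nu_{f}(\bar{x},\|x_{n_{j}}-\bar{x}\|)$ and the standard monotonicity/superlinearity of the modulus $\nu_{f}(\bar{x},\cdot)$ (which gives $\nu_{f}(\bar{x},t)\ge\nu_{f}(\bar{x},\varepsilon)>0$ for $t\ge\varepsilon$, the positivity being exactly total convexity at $\bar{x}$), one contradicts $D_{f}(x_{n},\bar{x})\to 0$. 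I expect this passage to be the main obstacle, since it hinges on the precise positivity and monotone growth of the modulus of total convexity rather than on a purely formal manipulation; equivalently one may phrase it through sequential consistency, observing that all sequences involved remain in a bounded set and invoking Lemma \ref{lem6}. This yields $\|x_{n}-\bar{x}\|\to 0$, i.e.\ $x_{n}\to proj^{f}_{C}x_{0}$, completing the proof.
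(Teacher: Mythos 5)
The paper does not prove this lemma; it is imported verbatim as \cite[Proposition 2.2]{rei}. Your argument is correct and is essentially the standard proof from that reference: identify all weak subsequential limits with $proj^{f}_{C}x_{0}$ via weak lower semicontinuity of $D_{f}(\cdot,x_{0})$ and minimality, pass to $D_{f}(x_{n},\bar{x})\to 0$ through the three-point identity, and conclude via the positivity and monotonicity of the modulus of total convexity at $\bar{x}$ (all ingredients available here since $\operatorname{dom}f=E$). Nothing further is needed.
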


\begin{definition}\cite{rei}
Let $T:C\to C$ be a nonlinear mapping. The fixed points set of $T$ is denoted by $F(T)$, that is $F(T)=\{x\in C: Tx=x\}$. A mapping $T$ is said to be nonexpansive if $\|Tx-Ty\|\leq \|x-y\|$ for all $x,y\in C$. $T$ is said to be quasi-nonexpansive if $F(T)\neq \emptyset$ and $\|Tx-p\|\leq \|x-p\|,$ for all $x\in C$ and $p\in F(T)$. A point $p\in C$ is called an asymptotic fixed point of $T$ (see \cite{alb2}) if $C$ contains a sequence $\{x_{n}\}$ which converges weakly to $p$ such that $\lim_{n\to\infty}\|x_{n}-Tx_{n}\|=0$.
A point $p\in C$ is called a strong asymptotic fixed point of $T$ (see \cite{alb2}) if $C$ contains a sequence $\{x_{n}\}$ which converges strongly to $p$ such that $\lim_{n\to\infty}\|x_{n}-Tx_{n}\|=0$.
We denote the sets of asymptotic fixed points and strong asymptotic fixed points of $T$ by $\widehat{F}(T)$ and $\widetilde{F}(T)$, respectively.
\end{definition}

A mapping $T:C\to\text{int}(\text{dom} f)$ with $F(T)\neq\emptyset$ is called:
\begin{enumerate}
\item quasi-Bregman nonexpansive \cite{rei} with respect to $f$ if 
$$D_{f}(p,Tx)\leq D_{f}(p,x), \forall x\in C, p\in F(T).$$
\item
Bregman relatively nonexpansive \cite{rei} with respect to $f$ if,
$$ D_{f}(p,Tx)\leq D_{f}(p,x), \ \ \forall x\in C, p\in F(T), \ \ \ \text{and} \ \ \widehat{F}(T)=F(T).  $$
\item
Bregman strongly nonexpansive (see \cite{bru,rei}) with respect to $f$ and $\widehat{F}(T)$ if,
$$ D_{f}(p,Tx)\leq D_{f}(p,x), \ \ \forall x\in C, p\in \widehat{F}(T) $$
and, if whenever $\{x_{n}\}\subset C$ is bounded, $p\in \widehat{F}(T)$, and 
$$\lim_{z\to\infty}(D_{f}(p,x_{n})-D_{f}(p,Tx_{n}))=0,$$
it follows that 
$$\lim_{n\to\infty} D_{f}(x_{n},Tx_{n})=0.$$
\item
Bregman firmly nonexpansive (for short BFNE) with respect to $f$ if, for all $x,y\in C$,
$$\langle \nabla f(Tx)-\nabla f(Ty),Tx-Ty\rangle \leq \langle \nabla f(x)-\nabla f(y),Tx-Ty\rangle$$
equivalently,
\begin{equation}
D_{f}(Tx,Ty)+D_{f}(Ty,Tx)+D_{f}(Tx,x)+D_{f}(Ty,y)\leq D_{f}(Tx,y)+D_{f}(Ty,x).
\label{5}
\end{equation}
\item
a weak Bregman relatively nonexpansive mapping with $F(T)\neq\emptyset$ if $\widetilde{F}(T)=F(T)$ and
$$D_{f}(p,Tx)\leq D_{f}(p,x),  \ \ \forall x\in C, p\in F(T).$$
\end{enumerate}
The existence and approximation of Bregman firmly nonexpansive mappings was studied in \cite{rei2}. It is also known that if $T$ is Bregman firmly nonexpansive and $f$ is Legendre function which is bounded, uniformly Fr\'{e}chet differentiable and totally convex on bounded subset of $E$, then $F(T)=\widehat{F}(T)$ and $F(T)$ is closed and convex. It also follows that every Bregman firmly nonexpansive mapping is Bregman strongly nonexpansive with respect to $F(T)=\widehat{F}(T)$.

Let $C$ be a nonempty, closed  and convex subset of $E$. Let $f:E\to \mathbb{R}$ be a G\^{a}teaux differentiable and totally convex function. Let $x\in E$ it is known from \cite{but} that $z=proj_{C}^{f}(x)$ if and only if 
$$\langle \nabla f(x)-\nabla f(z),y-z\rangle \leq 0, \ \ \ \forall y\in C.$$
We also know the following:
\begin{equation}\label{9}
D_{f}(y,proj_{C}^{f}(x))+D_{f}(proj_{C}^{f}(x),x)\leq D_{f}(y,x),  \ \ \ \forall x\in E, y\in C.
\end{equation}

Let $f:E\to\mathbb{R}$ be a convex, Legendre and G\^{a}teaux differentiable function. Following \cite{alb} and \cite{cens}, we make use of the function $V_{f}:E\times E^{*}\to [0,\infty)$ associated with $f$, which is defined by
$$V_{f}(x,x^{*})=f(x)-\langle x^{*},x\rangle +f^{*}(x^{*}), \ \ \ \ \forall x\in E, x^{*}\in E^{*}.$$ 
Then $V_{f}$ is nonexpansive and $V_{f}(x,x^{*})=D_{f}(x,\nabla f^{*}(x^{*}))$ for all $x\in E$ and $x^{*}\in E^{*}$. Moreover, by the subdifferential inequality,
\begin{equation}\label{29}
V_{f}(x,x^{*})+\langle y^{*},\nabla f^{*}(x^{*})-x\rangle\leq V_{f}(x,x^{*}+y^{*})
\end{equation}
for all $x\in E$ and $x^{*},y^{*}\in E^{*}$ \cite{koh}. In addition, if $f:E\to (-\infty,+\infty]$ is a proper lower semicontinuous function, then $f^{*}:E^{*}\to(-\infty,+\infty]$ is a proper weak$^{*}$ lower semicontinuous and convex function (see \cite{mar}). Hence, $V_{f}$ is convex in the second variable. Thus, for all $z \in E$,
$$D_{f}\left(z,\nabla f^{*}\left(\sum_{i=1}^{N}t_{i}\nabla f(x_{i})\right)\right)\leq \sum_{i=1}^{N}t_{i}D_{f}(z,x_{i}),$$
where $\{x_{i}\}_{i=1}^{N}\subset E$ and $\{t_{i}\}_{i=1}^{N}\subset (0,1)$ with $\sum_{i=1}^{N}t_{i}=1$.
\begin{lemma}\label{agra}\cite{but}
Let $f\to(-\infty,+\infty]$ be G\^{a}teaux differentiable and totally convex on $\text{int}(\text{dom}f)$. Let $x\in \text{int}(\text{dom}f)$ and $C\subset \text{int}(\text{dom}f)$ be a nonempty, closed convex set. If $\hat{x}\in C$, then the following statements are equivalent:
\begin{enumerate}
\item
the vector $z$ is the Bregman projection of $x$ onto $C$ with respect to $f$;
\item
the vector $z$ is the unique solution of the variational inequality:
$$\langle \nabla f(x)-\nabla f(z),z-y\rangle\geq0, \ \ \forall y\in C;$$
\item
the vector $z$ is the unique solution of the inequality:
$$D_{f}(y,z)+D_{f}(z,x)\leq D_{f}(y,x), \ \ \forall y\in C.$$
\end{enumerate}
\end{lemma}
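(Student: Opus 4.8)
The plan is to prove the cyclic chain $(1)\Rightarrow(2)\Rightarrow(3)\Rightarrow(1)$ and then to settle the uniqueness asserted in $(2)$ and $(3)$ separately. The guiding idea is that all three statements encode the first-order optimality condition for the convex problem $\min_{y\in C}D_f(y,x)$, and the three-point identity recorded in the preliminaries is the device that converts the differential form of this condition into the distance form.

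For $(1)\Rightarrow(2)$ I would exploit the convexity of $C$. If $z=proj_C^f(x)$ minimizes $y\mapsto D_f(y,x)$ over $C$, then for each fixed $y\in C$ the segment $z+t(y-z)$ stays in $C$ for $t\in[0,1]$, so the scalar function $g(t):=D_f\big(z+t(y-z),x\big)$ has a minimum at $t=0$. Differentiating at $0$ using the G\^{a}teaux differentiability of $f$ gives $g'(0)=\langle\nabla f(z)-\nabla f(x),y-z\rangle\geq0$, which is exactly the variational inequality $\langle\nabla f(x)-\nabla f(z),z-y\rangle\geq0$ of $(2)$.

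For $(2)\Rightarrow(3)$ I would apply the three-point identity with the relabeling $(x,y,z)\mapsto(y,z,x)$, giving $D_f(y,z)+D_f(z,x)-D_f(y,x)=\langle\nabla f(x)-\nabla f(z),y-z\rangle$. The right-hand side equals $-\langle\nabla f(x)-\nabla f(z),z-y\rangle\leq0$ by $(2)$, which yields the inequality of $(3)$ at once. For $(3)\Rightarrow(1)$, since $D_f(y,z)\geq0$ always, the inequality of $(3)$ forces $D_f(z,x)\leq D_f(y,x)$ for every $y\in C$, so $z$ minimizes $D_f(\cdot,x)$ over $C$ and is by definition the Bregman projection.

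The step that genuinely needs the hypothesis of total convexity — and the one I expect to be the main obstacle — is uniqueness. Total convexity guarantees the implication $D_f(w,v)=0\Rightarrow w=v$, which is the strict-separation property making the minimizer unique. Concretely, if $z$ and $z'$ both satisfy $(3)$ (equivalently, are both Bregman projections, so $D_f(z,x)=D_f(z',x)$), then testing the inequality for $z$ at $y=z'$ gives $D_f(z',z)+D_f(z,x)\leq D_f(z',x)$, hence $D_f(z',z)\leq0$ and therefore $z'=z$. The same conclusion can be reached for $(2)$ either directly — adding the two variational inequalities obtained from $z$ and $z'$ and invoking the strict monotonicity of $\nabla f$ that total convexity entails — or by transporting uniqueness through the equivalence already established. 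Combining the chain of implications with this uniqueness statement gives the equivalence of $(1)$, $(2)$, and $(3)$.
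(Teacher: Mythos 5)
The paper does not prove this lemma: it is stated as a quotation from Butnariu and Resmerita \cite{but}, so there is no in-paper argument to compare against. Judged on its own, your proof is correct and follows the standard route. The directional-derivative computation in $(1)\Rightarrow(2)$ is sound: for fixed $y\in C$ the map $t\mapsto D_{f}(z+t(y-z),x)$ is convex on $[0,1]$, so its one-sided derivative at $0^{+}$ exists and is nonnegative at a minimizer, and it equals $\langle\nabla f(z)-\nabla f(x),y-z\rangle$. The relabelled three-point identity does give $(2)\Rightarrow(3)$, and $(3)\Rightarrow(1)$ is immediate from $D_{f}\geq 0$. You also correctly locate where total convexity enters: it yields $D_{f}(w,v)=0\Rightarrow w=v$ (if $\|w-v\|=t>0$ then $0<\nu_{f}(v,t)\leq D_{f}(w,v)$), which makes the minimizer, and hence the solution of $(3)$, unique; uniqueness for $(2)$ then transports through the chain. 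Two small points worth making explicit in a write-up: the implication $(3)\Rightarrow(1)$ requires $z\in C$, which is only implicit in the statement; and the identification of the three solution sets is legitimate only after the full cycle \emph{and} the uniqueness argument are both in place, since $(2)$ and $(3)$ each assert that $z$ is the \emph{unique} solution.
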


\begin{lemma}\label{333}\cite{rei3}
Let $C$ be a nonempty, closed and convex subset of $\text{int}(\text{dom} f)$ and $T:C\to C$ be a quasi-Bregman nonexpansive mappings with respect to $f$. Then $F(T)$ is closed and convex.
\end{lemma}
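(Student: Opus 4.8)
The plan is to prove the two assertions separately, in both cases reducing matters to the fact that $D_{f}(x,y)=0$ forces $x=y$, which holds because the Legendre function $f$ is strictly convex on $\text{int}(\text{dom} f)$.

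For closedness, I would take a sequence $\{x_{n}\}\subset F(T)$ with $x_{n}\to x$; since $C$ is closed we have $x\in C$. Applying quasi-Bregman nonexpansiveness with the fixed point $p=x_{n}$ gives $D_{f}(x_{n},Tx)\leq D_{f}(x_{n},x)$ for every $n$. Because $f$ is G\^{a}teaux differentiable it is continuous, so $f(x_{n})\to f(x)$; holding $Tx$ and $x$ fixed in the second slots and letting $n\to\infty$ yields $D_{f}(x_{n},Tx)\to D_{f}(x,Tx)$ and $D_{f}(x_{n},x)\to D_{f}(x,x)=0$. Hence $D_{f}(x,Tx)\leq 0$, and since $D_{f}\geq 0$ we get $D_{f}(x,Tx)=0$, i.e. $Tx=x$. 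Thus $x\in F(T)$ and $F(T)$ is closed.

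For convexity, fix $p,q\in F(T)$ and $t\in(0,1)$, and set $w=tp+(1-t)q$, which lies in $C\subset\text{int}(\text{dom} f)$ by convexity of $C$; note $Tw\in C$ as well, so the three-point identity is applicable to the triples $(p,w,Tw)$ and $(q,w,Tw)$. The idea is to detect whether $Tw=w$ through the single quantity $D_{f}(w,Tw)$ and show it vanishes. Using the three-point identity I would rewrite
$$D_{f}(p,Tw)-D_{f}(p,w)=D_{f}(w,Tw)-\langle \nabla f(Tw)-\nabla f(w),\,p-w\rangle,$$
and the analogous identity with $q$ in place of $p$. Forming the convex combination of these two identities with weights $t$ and $1-t$, the gradient terms combine into $\langle \nabla f(Tw)-\nabla f(w),\,t(p-w)+(1-t)(q-w)\rangle$, and the bracket $t(p-w)+(1-t)(q-w)=w-w=0$ vanishes by the very choice of $w$. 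This leaves
$$t\big(D_{f}(p,Tw)-D_{f}(p,w)\big)+(1-t)\big(D_{f}(q,Tw)-D_{f}(q,w)\big)=D_{f}(w,Tw).$$
On the left, quasi-Bregman nonexpansiveness gives $D_{f}(p,Tw)\leq D_{f}(p,w)$ and $D_{f}(q,Tw)\leq D_{f}(q,w)$, so the left-hand side is $\leq 0$; hence $D_{f}(w,Tw)\leq 0$, forcing $D_{f}(w,Tw)=0$ and $Tw=w$. Therefore $w\in F(T)$ and $F(T)$ is convex.

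The closedness argument is routine, being only continuity of $f$ together with the sign of $D_{f}$. The step carrying the real content is the cancellation of the gradient term in the convexity argument: it is precisely the affine choice $w=tp+(1-t)q$ that makes $t(p-w)+(1-t)(q-w)$ vanish, and this is what allows the two one-sided Bregman estimates to be merged into a single inequality on $D_{f}(w,Tw)$. The assumption I would keep in view throughout is the strict convexity of the Legendre $f$, since both conclusions rest on the implication $D_{f}=0 \Rightarrow$ equal arguments.
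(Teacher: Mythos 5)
Your argument is correct: the closedness part follows from continuity of $f$ on $\operatorname{int}(\operatorname{dom} f)$ together with the fact that strict convexity of the Legendre function $f$ gives $D_{f}(x,y)=0\Rightarrow x=y$, and the convexity part is a valid application of the three-point identity in which the affine choice $w=tp+(1-t)q$ kills the gradient term. The paper itself offers no proof of this lemma -- it is quoted from Reich and Sabach -- and your argument is essentially the standard one from that source, so there is nothing to reconcile.
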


For solving the generalized mixed equilibrium problem, let us assume that the bifunction $\Theta :C\times C\to\mathbb{R}$ satisfies the following conditions:

($A_{1}$) $\Theta (x,x)=0$ for all $x\in C$;

($A_{2}$) $\Theta$ is monotone, i.e., $\Theta (x,y)+\Theta (y,x)\leq0$ for any $x,y\in C$;

($A_{3}$) for each $y\in C, x\mapsto \Theta (x,y)$ is upper-hemicontinuous, i.e., for each $x,y,z\in C$,
$$\limsup_{t\searrow 0}\Theta(tz+(1-t)x,y)\leq \Theta (x,y);$$

($A_{4}$) for each $x\in C, y\mapsto \Theta (x,y)$ is convex and lower semicontinuous.

\begin{definition}\cite{darv}
Let $C$ be a nonempty, closed and convex subsets of a real reflexive Banach space and let $\varphi$ be a lower semicontinuous and convex functional from $C$ to $\mathbb{R}$ and $\Psi:C\to E^{*}$ be a continuous monotone mapping. Let $\Theta: C\times C\to\mathbb{R}$ be a bifunctional satisfying ($A_{1}$)-($A_{4}$). The \textit{mixed resolvent} of $\Theta$ is the operator $Res_{\Theta,\varphi,\Psi}^{f}:E\to 2^{C}$ 
\begin{eqnarray}
Res_{\Theta,\varphi,\Psi}^{f}(x)&=&\{z\in C: \Theta (z,y)+\varphi(y)+\langle \Psi x,y-z\rangle+\langle \nabla f(z)-\nabla f(x),y-z\rangle\nonumber\\
&& \geq \varphi(z), \ \ \forall y\in C\}.\label{nres}
\end{eqnarray}
\end{definition}

\begin{lemma}\label{nv}\cite{darv}
Let $f:E\to(-\infty,+\infty]$ be a coercive Legendre function. Let $C$ be a closed and convex subset of $E$. If the bifunction $\Theta: C\times C\to\mathbb{R}$ satisfies conditions ($A_{1})$-($A_{4}$), then 
\begin{enumerate}

\item $Res_{\Theta,\varphi,\Psi}^{f}$ is single-valued and $\text{dom}(Res_{\Theta,\varphi,\Psi}^{f})=E$;

\item $Res_{\Theta,\varphi,\Psi}^{f}$ is a BFNE operator;

\item $F\left(Res_{\Theta,\varphi,\Psi}^{f}\right)=GMEP(\Theta)$;

\item $GMEP(\Theta)$ is closed and convex;

\item $D_{f}\left(p, Res_{\Theta,\varphi,\Psi}^{f}(x)\right)+D_{f}\left(Res_{\Theta,\varphi,\Psi}^{f}(x),x\right)\leq D_{f}(p,x), \ \forall p\in F\left(Res_{\Theta,\varphi,\Psi}^{f}\right),\break x\in E$.
\end{enumerate}
\end{lemma}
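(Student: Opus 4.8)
\emph{Proof strategy.} I would treat the five assertions in the order (1), (3), (2), (5), (4), since each later item rests on the earlier ones. The first move is a reduction: for a \emph{fixed} input $x\in E$, absorb the data into the single bifunction
$$G_{x}(z,y):=\Theta(z,y)+\varphi(y)-\varphi(z)+\langle\Psi x,y-z\rangle,$$
and verify that $G_{x}$ again obeys $(A_{1})$--$(A_{4})$: $(A_{1})$ is immediate; $(A_{2})$ holds because the affine term $\langle\Psi x,\cdot\rangle$ and the $\varphi$ terms cancel in $G_{x}(z,y)+G_{x}(y,z)$, leaving $\Theta(z,y)+\Theta(y,z)\leq0$; and $(A_{3})$--$(A_{4})$ are inherited from $\Theta$ together with the convexity and lower semicontinuity of $\varphi$. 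With this reduction, $z\in Res^{f}_{\Theta,\varphi,\Psi}(x)$ is exactly the statement $G_{x}(z,y)+\langle\nabla f(z)-\nabla f(x),y-z\rangle\geq0$ for all $y\in C$, so I may borrow the standard theory of Bregman resolvents of a single bifunction.

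The technical heart is assertion (1). For existence I would run a Blum--Oettli / KKM argument: fix $x$, define the set-valued map sending $y\in C$ to $\{z\in C:G_{x}(z,y)+\langle\nabla f(z)-\nabla f(x),y-z\rangle\geq0\}$, show it is a KKM map using $(A_{1})$, $(A_{4})$ and convexity of $C$, exploit the coercivity of the Legendre function $f$ to confine the search to a bounded, hence weakly compact, subset of the reflexive space $E$, and then invoke the KKM lemma to produce a common point $z$. Single-valuedness follows by the usual pairing: if $z_{1}$ and $z_{2}$ both solve the inclusion, put $y=z_{2}$ in the inequality for $z_{1}$ and $y=z_{1}$ in that for $z_{2}$, add, discard $\Theta(z_{1},z_{2})+\Theta(z_{2},z_{1})\leq0$ through $(A_{2})$, and obtain $\langle\nabla f(z_{1})-\nabla f(z_{2}),z_{1}-z_{2}\rangle\leq0$; strict convexity of the Legendre $f$ (strict monotonicity of $\nabla f$) then forces $z_{1}=z_{2}$. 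Existence for every $x$ gives $\mathrm{dom}(Res^{f}_{\Theta,\varphi,\Psi})=E$. I expect this existence step to be the main obstacle, as it is the only place requiring a genuine fixed-point/compactness input rather than an algebraic manipulation.

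Assertion (3) is then purely formal: $z\in F(Res^{f}_{\Theta,\varphi,\Psi})$ means $z\in Res^{f}_{\Theta,\varphi,\Psi}(z)$, and setting $x=z$ annihilates the term $\langle\nabla f(z)-\nabla f(x),\cdot\rangle$, turning the defining inequality into precisely the $GMEP(\Theta)$ inequality; the converse is the same computation read backwards, and single-valuedness from (1) closes the equivalence. For (2) I would once more pair the defining inequalities, now for $u=Res^{f}_{\Theta,\varphi,\Psi}(x)$ and $v=Res^{f}_{\Theta,\varphi,\Psi}(y)$ at two inputs: put $y=v$ for $u$ and $y=u$ for $v$, add, and use $(A_{2})$. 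The Bregman terms assemble into the firm-nonexpansiveness inequality $\langle\nabla f(u)-\nabla f(v),u-v\rangle\leq\langle\nabla f(x)-\nabla f(y),u-v\rangle$, where the residual cross term $\langle\Psi x-\Psi y,\cdot\rangle$ must be controlled through the monotonicity of $\Psi$; handling this $\Psi$-term is the one delicate point beyond the existence argument.

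Finally, granting (2), assertion (5) drops out by specialising to $v=Res^{f}_{\Theta,\varphi,\Psi}(y)=p\in F(Res^{f}_{\Theta,\varphi,\Psi})$: the BFNE inequality yields $\langle\nabla f(x)-\nabla f(Res^{f}_{\Theta,\varphi,\Psi}(x)),Res^{f}_{\Theta,\varphi,\Psi}(x)-p\rangle\geq0$, while the three-point identity rewrites $D_{f}(p,Res^{f}_{\Theta,\varphi,\Psi}(x))+D_{f}(Res^{f}_{\Theta,\varphi,\Psi}(x),x)-D_{f}(p,x)$ as exactly the negative of this quantity, giving the claimed inequality. Assertion (4) is then a corollary: discarding the nonnegative term $D_{f}(Res^{f}_{\Theta,\varphi,\Psi}(x),x)$ in (5) shows that $Res^{f}_{\Theta,\varphi,\Psi}$ is quasi-Bregman nonexpansive, so Lemma \ref{333} applies and $F(Res^{f}_{\Theta,\varphi,\Psi})=GMEP(\Theta)$, by (3), is closed and convex.
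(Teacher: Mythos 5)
First, a point of comparison: the paper offers no proof of Lemma \ref{nv} at all --- it is imported verbatim from \cite{darv} --- so there is no in-paper argument to measure yours against. Taken on its own terms, your outline follows the standard route and most of it is sound: the KKM/Blum--Oettli existence argument with coercivity of $f$ supplying weak compactness, the pairing argument for single-valuedness (where both inequalities share the same input $x$, so no cross term arises), the one-line verification of (3), and the derivation of (4) from (5), (3) and Lemma \ref{333}.

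The genuine gap is precisely the point you flag and then defer: the $\Psi$ cross term in item (2). Because the resolvent \eqref{nres} evaluates $\Psi$ at the \emph{input} $x$ rather than at the output $z$, your auxiliary bifunction $G_{x}$ depends on $x$; pairing the defining inequalities for $u=Res^{f}_{\Theta,\varphi,\Psi}(x)$ and $v=Res^{f}_{\Theta,\varphi,\Psi}(y)$ and invoking $(A_{2})$ leaves
$$\langle\nabla f(u)-\nabla f(v),u-v\rangle\leq\langle\nabla f(x)-\nabla f(y),u-v\rangle-\langle\Psi x-\Psi y,u-v\rangle,$$
and monotonicity of $\Psi$ only controls the pairing $\langle\Psi x-\Psi y,x-y\rangle$, not $\langle\Psi x-\Psi y,u-v\rangle$. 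The same obstruction already appears in item (5): specialising to $v=p\in F(Res^{f}_{\Theta,\varphi,\Psi})$ leaves an uncontrolled term $\langle\Psi x-\Psi p,p-u\rangle$, so neither (2) nor (5) follows from your reduction as written. The way this is resolved in the source and in the surrounding literature is to change the reduction: absorb $\Psi$ into the bifunction at the \emph{output}, i.e. set $\tilde\Theta(z,y):=\Theta(z,y)+\langle\Psi z,y-z\rangle+\varphi(y)-\varphi(z)$, check that $\tilde\Theta$ satisfies $(A_{1})$--$(A_{4})$ (monotonicity of $\Psi$ gives $(A_{2})$, its continuity gives $(A_{3})$, convexity and lower semicontinuity of $\varphi$ give $(A_{4})$), and then quote the single-bifunction resolvent theory of Reich--Sabach for all five assertions at once. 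If you insist on the input-evaluated form \eqref{nres}, you need to supply an argument for the cross term that you do not currently have; ``controlled through the monotonicity of $\Psi$'' is not one.
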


\begin{lemma}\cite[Proposition 5]{kas}
Let $f:E\to\mathbb{R}$ be a Legendre function such that $\nabla f^{*}$ is bounded on bounded subset of $int dom f^{*}$. For $x\in E$, if $\{D_{f}(x,x_{n})\}$ is bounded, then the sequence $\{x_{n}\}$ is bounded.
\end{lemma}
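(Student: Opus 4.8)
The plan is to pass to the dual sequence $x_n^{*}:=\nabla f(x_n)$ and reduce everything to boundedness of $\{x_n^{*}\}$, after which the hypothesis on $\nabla f^{*}$ finishes the job. Since $f$ is Legendre we have $\nabla f=(\nabla f^{*})^{-1}$, so $x_n=\nabla f^{*}(x_n^{*})$, and recalling the identity $V_{f}(x,x^{*})=D_{f}(x,\nabla f^{*}(x^{*}))$ together with the definition of $V_f$ I would write
$$D_{f}(x,x_n)=V_{f}(x,x_n^{*})=f(x)-\langle x_n^{*},x\rangle+f^{*}(x_n^{*}).$$
Thus the given bounded sequence $\{D_{f}(x,x_n)\}$ is exactly $\{V_{f}(x,x_n^{*})\}$, and it suffices to prove that $V_{f}(x,\cdot)$ is coercive in its dual variable, forcing $\{x_n^{*}\}$ to be bounded.

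The main step is the coercivity estimate, which I would get from the Fenchel--Young inequality $f^{*}(x^{*})\ge\langle x^{*},w\rangle-f(w)$ applied to $w=x+z$ with $\|z\|\le 1$. This yields, for every such $z$,
$$f^{*}(x^{*})-\langle x^{*},x\rangle\ge\langle x^{*},z\rangle-f(x+z)\ge\langle x^{*},z\rangle-C,$$
where $C:=\sup_{\|z\|\le 1}f(x+z)<\infty$ because $f$, being Legendre and hence G\^ateaux differentiable on $\text{int}(\text{dom} f)=E$, is continuous and therefore bounded on the ball $\{x+z:\|z\|\le 1\}$. The left-hand side is independent of $z$, so taking the supremum over $\|z\|\le 1$ on the right gives $f^{*}(x^{*})-\langle x^{*},x\rangle\ge\|x^{*}\|-C$, i.e. $V_{f}(x,x^{*})\ge f(x)+\|x^{*}\|-C$.

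Finally I would apply this with $x^{*}=x_n^{*}$: if $M$ bounds $\{D_{f}(x,x_n)\}$, then $\|x_n^{*}\|\le M-f(x)+C$ for all $n$, so $\{x_n^{*}\}$ is bounded. Since $x_n^{*}=\nabla f(x_n)\in\text{ran}\,\nabla f=\text{int}(\text{dom} f^{*})$ and, by hypothesis, $\nabla f^{*}$ is bounded on bounded subsets of $\text{int}(\text{dom} f^{*})$, the sequence $\{x_n\}=\{\nabla f^{*}(x_n^{*})\}$ is bounded, as claimed. The only delicate point in this argument is justifying the coercivity bound and the finiteness of $C$; both hinge on continuity of $f$ (from the Legendre/G\^ateaux-differentiability hypothesis) guaranteeing local boundedness, while the reduction through the conjugate and the closing step via $\nabla f^{*}$ are routine consequences of the Legendre identities and the stated hypothesis.
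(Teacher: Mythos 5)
This lemma is imported verbatim from \cite[Proposition 5]{kas}; the paper supplies no proof of its own, so there is nothing in-text to compare against. Your reconstruction follows what is essentially the standard argument for this fact: pass to the dual sequence $x_n^{*}=\nabla f(x_n)$, use the identity $D_f(x,x_n)=V_f(x,x_n^{*})=f(x)-\langle x_n^{*},x\rangle+f^{*}(x_n^{*})$ (which holds because equality is attained in Fenchel--Young at $x^{*}=\nabla f(x_n)$), establish coercivity of $x^{*}\mapsto f^{*}(x^{*})-\langle x^{*},x\rangle$, deduce that $\{x_n^{*}\}$ is bounded, and close with the hypothesis on $\nabla f^{*}$, correctly noting that $x_n^{*}\in\mathrm{ran}\,\nabla f=\mathrm{int}(\mathrm{dom}\,f^{*})$ so the hypothesis applies. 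The overall structure is sound.

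One step is stated too strongly and should be repaired. You justify $C:=\sup_{\|z\|\le 1}f(x+z)<\infty$ by saying that $f$ is continuous and ``therefore bounded on the ball.'' In an infinite-dimensional Banach space a continuous convex function need not be bounded above on a closed ball of prescribed radius $1$; continuity (equivalently, local boundedness above) at $x$ only guarantees $C_\delta:=\sup_{\|z\|\le\delta}f(x+z)<\infty$ for \emph{some} $\delta>0$. This does not damage the argument: running the same supremum over $\|z\|\le\delta$ gives
$f^{*}(x^{*})-\langle x^{*},x\rangle\ge\delta\|x^{*}\|-C_\delta$,
which is still coercive in $x^{*}$ and yields $\|x_n^{*}\|\le\bigl(M-f(x)+C_\delta\bigr)/\delta$ for all $n$. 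With that one-line adjustment the proof is complete and matches the argument underlying the cited proposition.
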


\section{Main result}
In this section, we prove our main theorem.
\begin{theorem}\label{mt}
Let $E$ be a real reflexive Banach space, $C$ be a nonempty, closed and convex subset of $E$. Let $f:E\to \mathbb{R}$ be a coercive Legendre function which is bounded, uniformly Fr\'{e}chet differentiable and totally convex on bounded subsets of $E$, and $\nabla f^{*}$ be bounded on bounded subset of $E^{*}$. Let $T:C\to C$ be a weak Bregman relatively nonexpansive mapping, $\Theta:C\times C\to\mathbb{R}$ satisfying conditions ($A_{1}$)-($A_{4}$), $\varphi:C\to \mathbb{R}$ is real-valued convex function and $\Psi: C\to E^{*}$ is continuous monotone mapping. Assume that $F(T)\cap GMEP(\Theta)$ is nonempty and bounded. Let $\{x_{n}\}$ be a sequence generated by
\begin{align}
&z_{n}=\nabla f^{*}(\beta_{n}\nabla f(T(x_{n}))+(1-\beta_{n})\nabla f(x_{n})), \nonumber\\
&y_{n}=\nabla f^{*}(\alpha_{n} \nabla f(x_{0})+(1-\alpha_{n})\nabla f(z_{n})),\nonumber\\
&u_{n}=Res_{\Theta,\varphi,\Psi}^{f}(y_{n}),\nonumber\\
&C_{n+1}=\{z\in C_{n}: D_{f}(z,u_{n})\leq \alpha_{n}D_{f}(z,x_{0})+(1-\alpha_{n})D_{f}(z,x_{n})\}, \nonumber\\
&Q_{n+1}=\{z\in Q_{n}: \langle\nabla f(x_{0})-\nabla f(x_{n}),z-x_{n}\rangle\leq 0\},\nonumber\\
&x_{n+1}= proj^{f}_{C_{n+1}\cap Q_{n+1}}x_{0}, \ \forall n\geq 0,\label{main}
\end{align}
where $\{\alpha_{n}\}, \{\beta_{n}\}\subset (0,1)$ satisfying $\lim_{n\to\infty}\alpha_{n}=0$ and $\liminf_{n\to\infty}(1-\alpha_{n})\beta_{n}>0$. Let $x_{0}\in C$ chosen arbitrarily, $Q_{0}=C$ and $C_{0}=\{z\in C: D_{f}(z,u_{0})\leq D_{f}(z,x_{0})\}$. Then,
 $\{x_{n}\}$ converges strongly to $proj^{f}_{F(T)\cap GMEP(\Theta)}x_{0}$.
\end{theorem}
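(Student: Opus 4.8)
The plan is to run the standard hybrid (CQ) analysis in the Bregman setting; the essential point is that the nested projection sets force \emph{strong} convergence of $\{x_{n}\}$, which is exactly what a merely weak Bregman relatively nonexpansive mapping requires, since its contraction property is stated for \emph{strong} asymptotic fixed points, $\widetilde{F}(T)=F(T)$. First I would set $F:=F(T)\cap GMEP(\Theta)$ and record that $F$ is nonempty, closed and convex: $F(T)$ is closed and convex by Lemma~\ref{333} (a weak Bregman relatively nonexpansive map is quasi-Bregman nonexpansive), while $GMEP(\Theta)=F\!\left(Res_{\Theta,\varphi,\Psi}^{f}\right)$ is closed and convex by Lemma~\ref{nv}(3)--(4); hence $proj^{f}_{F}x_{0}$ is well defined. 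Writing $D_{f}(z,\cdot)=f(z)-f(\cdot)-\langle\nabla f(\cdot),z-\cdot\rangle$, the $f(z)$-terms cancel in the defining inequality of $C_{n+1}$ (the weights sum to $1$), so $C_{n+1}$ is $C_{n}$ intersected with a half-space, and likewise $Q_{n+1}$ is $Q_{n}$ intersected with a half-space; thus all $C_{n},Q_{n}$ are closed and convex and $\{C_{n}\cap Q_{n}\}$ is nested. I would then prove $F\subseteq C_{n}\cap Q_{n}$ for all $n$ by induction: for $p\in F$, convexity of $V_{f}$ in the second variable (the inequality preceding Lemma~\ref{agra}) together with $D_{f}(p,Tx_{n})\le D_{f}(p,x_{n})$ yields $D_{f}(p,z_{n})\le D_{f}(p,x_{n})$ and then $D_{f}(p,y_{n})\le\alpha_{n}D_{f}(p,x_{0})+(1-\alpha_{n})D_{f}(p,x_{n})$; Lemma~\ref{nv}(5) gives $D_{f}(p,u_{n})\le D_{f}(p,y_{n})$, so $p\in C_{n+1}$, while the variational characterization of the Bregman projection (Lemma~\ref{agra}(2)) applied to $x_{n}=proj^{f}_{C_{n}\cap Q_{n}}x_{0}$ gives $p\in Q_{n+1}$. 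In particular $C_{n}\cap Q_{n}\neq\emptyset$, so every $x_{n}$ is well defined.

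Next I would extract strong convergence. Since $x_{n+1}\in C_{n+1}\cap Q_{n+1}\subseteq C_{n}\cap Q_{n}$ and $x_{n}=proj^{f}_{C_{n}\cap Q_{n}}x_{0}$, inequality~\eqref{9} gives $D_{f}(x_{n+1},x_{n})+D_{f}(x_{n},x_{0})\le D_{f}(x_{n+1},x_{0})$, so $\{D_{f}(x_{n},x_{0})\}$ is nondecreasing; it is bounded above by $D_{f}(proj^{f}_{F}x_{0},x_{0})$ because $F\subseteq C_{n}\cap Q_{n}$ and $x_{n}$ minimizes $D_{f}(\cdot,x_{0})$ over $C_{n}\cap Q_{n}$. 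Hence $\{D_{f}(x_{n},x_{0})\}$ converges and, by Lemma~\ref{aust}, $\{x_{n}\}$ is bounded. Applying~\eqref{9} with any $m\ge n$ (using $x_{m}\in C_{n}\cap Q_{n}$) yields $D_{f}(x_{m},x_{n})\le D_{f}(x_{m},x_{0})-D_{f}(x_{n},x_{0})\to 0$, and since $f$ is totally convex on bounded sets it is sequentially consistent (Lemma~\ref{lem6}), so $\|x_{m}-x_{n}\|\to 0$. Thus $\{x_{n}\}$ is Cauchy and converges strongly to some $q\in C$.

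It remains to identify $q$. From $x_{n+1}\in C_{n+1}$, $\alpha_{n}\to0$, boundedness of $\{D_{f}(x_{n+1},x_{0})\}$ and $D_{f}(x_{n+1},x_{n})\to0$, I obtain $D_{f}(x_{n+1},u_{n})\to0$, hence $\|x_{n}-u_{n}\|\to0$ and $u_{n}\to q$. Squeezing $D_{f}(p,u_{n})\le D_{f}(p,y_{n})\le\alpha_{n}D_{f}(p,x_{0})+(1-\alpha_{n})D_{f}(p,x_{n})$, whose two ends converge to $D_{f}(p,q)$, and rewriting Lemma~\ref{nv}(5) as $D_{f}(u_{n},y_{n})\le D_{f}(p,y_{n})-D_{f}(p,u_{n})$, I get $D_{f}(u_{n},y_{n})\to0$, so $\|u_{n}-y_{n}\|\to0$ and $y_{n}\to q$; since $Res_{\Theta,\varphi,\Psi}^{f}$ is BFNE with $F=\widehat{F}$, the data $y_{n}\rightharpoonup q$ and $\|y_{n}-Res_{\Theta,\varphi,\Psi}^{f}y_{n}\|=\|y_{n}-u_{n}\|\to0$ force $q\in GMEP(\Theta)$. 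For the fixed-point part, $\nabla f(y_{n})-\nabla f(z_{n})=\alpha_{n}(\nabla f(x_{0})-\nabla f(z_{n}))\to0$ gives $z_{n}\to q$, hence $\|z_{n}-x_{n}\|\to0$ and, by Lemma~\ref{lem7}, $\|\nabla f(z_{n})-\nabla f(x_{n})\|\to0$; since $\nabla f(z_{n})-\nabla f(x_{n})=\beta_{n}(\nabla f(Tx_{n})-\nabla f(x_{n}))$ and $\liminf_{n\to\infty}(1-\alpha_{n})\beta_{n}>0$ keeps $\beta_{n}$ bounded away from $0$, I conclude $\|\nabla f(Tx_{n})-\nabla f(x_{n})\|\to0$ and then $\|Tx_{n}-x_{n}\|\to0$ using boundedness and uniform continuity of $\nabla f^{*}$. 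As $x_{n}\to q$ strongly, $q$ is a strong asymptotic fixed point, so $q\in\widetilde{F}(T)=F(T)$, and therefore $q\in F$.

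Finally, $\{x_{n}\}$ is bounded, $q\in F$, and $D_{f}(x_{n},x_{0})\le D_{f}(proj^{f}_{F}x_{0},x_{0})$, so Lemma~\ref{ecap} identifies the limit as $q=proj^{f}_{F(T)\cap GMEP(\Theta)}x_{0}$. The step I expect to be the main obstacle is precisely establishing $q\in F$, in particular $\|x_{n}-Tx_{n}\|\to0$: one must peel off the two averaging layers (the $T$-step and the resolvent step) in the correct order and invoke $\liminf_{n\to\infty}(1-\alpha_{n})\beta_{n}>0$ exactly to prevent $\beta_{n}$ from degenerating, and—more conceptually—one must have already secured \emph{strong} convergence of $\{x_{n}\}$, because for a weak Bregman relatively nonexpansive $T$ a weak subsequential limit would not be enough to conclude that $q$ is fixed. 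Verifying boundedness of the auxiliary sequences $\{Tx_{n}\}$, $\{z_{n}\}$ and $\{y_{n}\}$ (needed before the uniform-continuity arguments for $\nabla f$ and $\nabla f^{*}$) is routine, following from $D_{f}(p,Tx_{n})\le D_{f}(p,x_{n})$, Lemma~\ref{aust}, and boundedness of $\nabla f,\nabla f^{*}$ on bounded sets, but must be carried out first.
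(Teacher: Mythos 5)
Your proposal is correct, and its overall architecture coincides with the paper's proof: closedness and convexity of $C_{n}$ and $Q_{n}$ via the half-space rewriting, the induction $F(T)\cap GMEP(\Theta)\subseteq C_{n}\cap Q_{n}$ using $D_{f}(p,u_{n})\le D_{f}(p,y_{n})\le \alpha_{n}D_{f}(p,x_{0})+(1-\alpha_{n})D_{f}(p,x_{n})$, monotonicity and boundedness of $\{D_{f}(x_{n},x_{0})\}$, the Cauchy estimate $D_{f}(x_{m},x_{n})\le D_{f}(x_{m},x_{0})-D_{f}(x_{n},x_{0})$ from \eqref{9} combined with sequential consistency, and the final identification of the limit through Lemma~\ref{ecap}.

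The one place you genuinely depart from the paper is the step $q\in GMEP(\Theta)$. You obtain it in one stroke: $Res^{f}_{\Theta,\varphi,\Psi}$ is BFNE by Lemma~\ref{nv}, BFNE operators under the stated hypotheses on $f$ satisfy $F=\widehat{F}$, and $y_{n}\to q$ with $\|y_{n}-u_{n}\|\to 0$ then forces $q\in F\bigl(Res^{f}_{\Theta,\varphi,\Psi}\bigr)=GMEP(\Theta)$. The paper instead argues directly from the resolvent inequality defining $u_{n}=Res^{f}_{\Theta,\varphi,\Psi}(y_{n})$: it passes to the limit using $(A_{2})$, continuity of $\Psi$, lower semicontinuity of $\varphi$ and of $\Theta$ in its second variable, and then runs the Blum--Oettli convexity trick with $y_{t}=ty+(1-t)q$ and $(A_{3})$. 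Your route is shorter, but it leans on the (unproved, only quoted) preliminary assertion that $F=\widehat{F}$ for BFNE maps and tacitly applies the asymptotic-fixed-point notion to an operator whose domain is $E$ rather than $C$; the paper's route is longer but self-contained, using only $(A_{1})$--$(A_{4})$. Your two-stage peeling for $\|x_{n}-Tx_{n}\|\to 0$ (first $\nabla f(y_{n})-\nabla f(z_{n})=\alpha_{n}(\nabla f(x_{0})-\nabla f(z_{n}))\to 0$, then $\nabla f(z_{n})-\nabla f(x_{n})=\beta_{n}(\nabla f(Tx_{n})-\nabla f(x_{n}))$ with $\liminf_{n\to\infty}\beta_{n}\ge\liminf_{n\to\infty}(1-\alpha_{n})\beta_{n}>0$) is an equivalent rearrangement of the paper's single lower bound \eqref{mahnaz4}; both hinge on the same prior facts $\|x_{n}-u_{n}\|\to 0$ and $\|u_{n}-y_{n}\|\to 0$, and both correctly exploit that the convergence of $\{x_{n}\}$ is strong, as required for $\widetilde{F}(T)=F(T)$.
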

\begin{proof}

We prove our theorem by several claims:

\begin{claim}\label{pro1} The sequence $\{x_n\}$ in \eqref{main} is well defined.
\end{claim}
We note from Lemmas \ref{333} and \ref{nv} that $F(T)$ and $GMEP(\Theta)$ are closed and convex.

First, we show that $C_n$ and $Q_n$ are closed and convex subsets of $E$. It is clear that $C_0$ and $Q_0$ are closed and convex subsets. Suppose that $C_n$ and $Q_n$ are closed and convex subsets of $E$ for some $n\geq 0$. We rewrite the set $C_{n+1}$ in the following form
\begin{align*}
C_{n+1}&=C_n\cap \{z\in E:\ D_{f}(z,u_{n})\leq \alpha_{n}D_{f}(z,x_{0})+(1-\alpha_{n})D_{f}(z,x_{n})\}\\
&=C_n\cap \{z\in E:\ \langle \alpha_n \nabla f(x_0)+(1-\alpha_n)\nabla f(x_n)-f(u_n),z\rangle \leq \alpha_n \langle \nabla f(x_0),x_0\rangle\\
&\quad +(1-\alpha_n)\langle \nabla f(x_n),x_n\rangle -\alpha_n f(x_0)-(1-\alpha_n)f(x_n) +f(u_n)-\langle \nabla f(u_n),u_n\rangle.
\end{align*}
Thus, $C_{n+1}$ is closed and convex subset of $E$.

Next, it follows from
\begin{align*}
Q_{n+1}&=Q_n\cap \{z\in E:\ \langle \nabla f(x_0)-\nabla f(x_n),z\rangle \leq \langle \nabla f(x_0)-\nabla f(x_n),x_n\rangle,
\end{align*}
that $Q_{n+1}$ is also closed and convex subset of $E$.

Now, in order to finish the proof of this claim, we will prove that $F(T)\cap GMEP(\Theta)\subset C_n\cap Q_n$ for all $n\geq 0$. Indeed, obviously $F(T)\cap GMEP(\Theta)\subset C_0\cap Q_0$. We suppose that $F(T)\cap GMEP(\Theta)\subset C_n\cap Q_n$ for some $n\geq 0$.\\
Let $p\in F(T)\cap GMEP(\Theta)$, from (\ref{main}) and Lemma \ref{nv}, we have 
\begin{align}
D_{f}(p,u_{n})&=D_{f}(p,Res_{\Theta,\varphi,\Psi}^{f}(y_{n}))\notag\\
&\label{eqt33}\leq D_f(p,y_n)-D_{f}(Res_{\Theta,\varphi,\Psi}^{f}(y_{n}),y_n)\notag.\\
\end{align}
Next, we have 
\begin{align}
D_f(p,y_n)&=D_{f}(p,\nabla f^{*}\left(\alpha_{n}\nabla f(x_{0})+(1-\alpha_{n})\nabla f(z_{n}))\right)\notag\\
&\label{eqt34}\leq \alpha_{n}D_{f}(p,x_{0})+(1-\alpha_{n})D_{f}(p,z_{n}).
\end{align}
We now estimate $D_{f}(p,z_{n})$, it follows form \eqref{main}, and the property of $T$ that
\begin{align}
D_f(p,z_n)&=D_{f}(p,\beta_{n}\nabla f(T(x_{n}))+(1-\beta_{n})\nabla f(x_{n}))\notag\\
&\leq \beta_{n}D_{f}(p,T{x_n})+(1-\beta_{n})D_{f}(p,x_{n})\notag\\
&\label{eqt35}\leq D_{f}(p,x_{n}).
\end{align}
From \eqref{eqt33}--\eqref{eqt35}, we get that
$$D_f(p,u_n)\leq \alpha_n D_f(p,x_0)+(1-\alpha_n)D_f(p,x_n).$$
This implies that $p\in C_{n+1}$ and hence $F(T)\cap GMEP(\Theta)\subset C_{n+1}$.

Since $x_{n}=proj^{f}_{C_{n}\cap Q_{n}}(x_{0})$, it follows from Lemma \ref{agra} that
$$\langle \nabla f(x_{0})-\nabla f(x_{n}),x_{n}-v\rangle\geq 0, \ \forall v\in C_{n}\cap Q_{n}.$$
Thus, from $p\in F(T)\cap GMEP(\Theta)\subset C_n\cap Q_n$, we obtain that
$$\langle \nabla f(x_{0})-\nabla f(x_{n}),x_{n}-p\rangle\geq 0,$$
that is, $p\in Q_{n+1}$ and hence $F(T)\cap GMEP(\Theta)\subset Q_{n+1}$. So, we deduce that $F(T)\cap GMEP(\Theta)\subset C_{n+1}\cap Q_{n+1}$. By mathematical induction, we get that $F(T)\cap GMEP(\Theta)\subset C_{n}\cap Q_{n}$ for all $n\geq 0$.

Thus, $C_n\cap Q_n$ is nonempty, closed and convex subset of $E$ for all $n\geq 0$ and hence the sequence $\{x_n\}$ is well defined.

\begin{claim}\label{pro2}
In \eqref{main}, the sequence $\{x_n\}$ is bounded.
\end{claim}
Since $\langle \nabla f(x_{0})-\nabla f(x_{n}),v-x_{n}\rangle\leq0$ for all $v\in Q_{n+1}$, it follows from Lemma \ref{agra} that $x_{n}=proj^{f}_{Q_{n+1}}x_{0}$ and by $x_{n+1}=proj^{f}_{C_{n+1}\cap Q_{n+1}}x_{0}\in Q_{n+1}$, we have
\begin{equation}\label{3.2f}
D_{f}(x_{n},x_{0})\leq D_{f}(x_{n+1},x_{0}).
\end{equation}
Let $p\in F(T)\cap GMEP(\Theta)\in Q_{n+1}$. It follows from Lemma \ref{agra} that
$$D_{f}(p,proj^{f}_{Q_{n+1}}x_{0})+D_{f}(proj^{f}_{Q_{n+1}}x_{0},x_{0})\leq D_{f}(p,x_{0})$$
and so
$$D_{f}(x_{n},x_{0})\leq D_{f}(p,x_{0})-D_{f}(p,x_{n})\leq D_{f}(p,x_{0}).$$
Therefore, $\{D_{f}(x_{n},x_{0})\}$ is bounded. By Lemma \ref{aust} $\{x_{n}\}$ is bounded and so are $\{T(x_{n})\}, \{y_{n}\}, \{z_{n}\}$.

\begin{claim}\label{pro3}
In \eqref{main}, the sequence $\{x_{n}\}$ is a Cauchy sequence.
\end{claim}

By the proof of Claim \ref{pro2}, we know that $\{D_{f}(x_{n},x_{0})\}$ is bounded. It follows from (\ref{3.2f}) that $\lim_{n\to\infty}D_{f}(x_{n},x_{0})$ exists. From $x_{m}\in Q_{m}\subseteq Q_{n+1}$ for all $m>n$ and Lemma \ref{agra}, we have
$$D_{f}(x_{m},proj_{Q_{n+1}}x_{0})+D_{f}(proj^{f}_{Q_{n+1}}x_{0},x_{0})\leq D_{f}(x_{m},x_{0})$$
and hence $D_{f}(x_{m},x_{n})\leq D_{f}(x_{m},x_{0})-D_{f}(x_{n},x_{0})$. Therefore, we have
\begin{equation}\label{3.3f}
\lim_{n\to\infty}D_{f}(x_{m},x_{n})\leq \lim_{n,m\to\infty}(D_{f}(x_{m},x_{0})-D_{f}(x_{n},x_{0}))=0.
\end{equation}
Since $f$ is totally convex on bounded subsets of $E$, by Definition \ref{deff}, Lemma  \ref{2.2.f} and (\ref{3.3f}) we obtain 
\begin{equation}\label{3.4f}
\lim_{n\to\infty}\|x_{m}-x_{n}\|=0.
\end{equation}
Thus $\{x_{n}\}$ is a Cauchy sequence and so $\lim_{n\to\infty}\|x_{n+1}-x_{n}\|=0$.

Now, we prove that the sequence $\{x_n\}$ generated by \eqref{main} converges strongly to $x^\dagger =proj^{f}_{F(T)\cap GMEP(\Theta)}x_{0}$.

From the proof of Claim \ref{pro2}, the sequence $\{x_{n}\}$ is a Cauchy sequence. Without of loss of generality, let $x_{n}\to q\in C$. Since $f$ is uniformly Fr\'{e}chet differentiable on bounded subsets of $E$. It follows from Lemma \ref{2.2.f} that $\nabla f$ is norm-to-norm uniformly continuous on bounded subsets of $E$. Hence, by $\|x_{n+1}-x_n\|\to 0$, we have
\begin{equation}
\lim_{n\to\infty}\|\nabla f(x_{n+1})-\nabla f(x_{n})\|=0.
\end{equation}
Since $x_{n+1}\in C_{n+1}\subset C_{n}$, we have
$$D_{f}(x_{n+1},u_{n})\leq \alpha_{n}D_{f}(x_{n+1},x_{0})+(1-\alpha_{n})D_{f}(x_{n+1},x_{n}).$$
It follows from $\lim_{n\to\infty}\alpha_{n}=0$ and $\lim_{n\to\infty}D_{f}(x_{n+1},x_{n})=0$ that $\{D_{f}(x_{n+1},u_{n})\}$ is bounded and
$$\lim_{n\to\infty}D_{f}(x_{n+1},u_{n})=0.$$
By Lemma \ref{lem6}, we obtain
\begin{equation}\label{mahnaz1}
\lim_{n\to\infty}\|x_{n+1}-u_{n}\|=0.
\end{equation}
So, 
\begin{equation}\label{3.10ma}
\lim_{n\to\infty}\|\nabla f(x_{n+1})-\nabla f(u_{n})\|=0.
\end{equation}
Taking into account that $\|x_{n}-u_{n}\|\leq \|x_{n}-x_{n+1}\|+\|x_{n+1}-u_{n}\|$, we obtain
$$\lim_{n\to\infty}\|x_{n}-u_{n}\|=0$$
so, $u_{n}\to q$ as $n\to\infty$.\\
By Bregman distance we have
\begin{eqnarray*}
&&D_{f}(p,x_{n+1})-D_{f}(p,u_{n})\\
&&=f(p)-f(x_{n+1})-\langle \nabla f(x_{n+1}),p-x_{n+1}\rangle -f(p)+f(u_{n})+\langle \nabla f(u_{n}),p-u_{n}\rangle\\
&&=f(u_{n})-f(x_{n+1})+\langle \nabla f(u_{n}),p-u_{n}\rangle-\langle \nabla f(x_{n+1}),p-x_{n+1}\rangle\\
&&=f(u_{n})-f(x_{n+1})+\langle \nabla f(u_{n}),x_{n+1}-u_{n}\rangle+\langle \nabla f(u_{n})-\nabla f(x_{n+1}),p-x_{n+1}\rangle,
\end{eqnarray*}
for each $p\in F(T)$. By (\ref{mahnaz1})-(\ref{3.10ma}), we obtain
\begin{equation}\label{3.11}
\lim_{n\to\infty}(D_{f}(p,x_{n+1})-D_{f}(p,u_{n}))=0.
\end{equation}
On the other hand, for any $p\in F(T)\cap GMEP(\Theta)$ by Lemma \ref{nv}, we have
\begin{align}
&D_{f}(u_{n},y_{n})\leq D_{f}(p,y_{n})-D_{f}(p,u_{n})\nonumber\\
&=D_{f}(p,\nabla f^{*}(\alpha_{n}\nabla f(x_{0})+(1-\alpha_{n})\nabla f(z_{n})))-D_{f}(p,u_{n})\nonumber\\
&\leq \alpha_{n}D_{f}(p,x_{0})+(1-\alpha_{n})D_{f}(p,\nabla f^{*}(\beta_{n}\nabla f(T(x_{n}))\nonumber\\
&+(1-\beta_{n})\nabla f(x_{n})))-D_{f}(p,u_{n})\nonumber\\
&\leq \alpha_{n}D_{f}(p,x_{0})+(1-\alpha_{n})D_{f}(p,x_{n})-D_{f}(p,u_{n})\nonumber\\
&=\alpha_{n}(D_{f}(p,x_{0})-D_{f}(p,x_{n}))+D_{f}(p,x_{n})-D_{f}(p,u_{n}).
&\nonumber\\
\label{mahnaz3}
\end{align}
So, by (\ref{3.11}) and (\ref{mahnaz3}) we have $D_{f}(u_{n},y_{n})=0$ and $D_{f}(p,y_{n})-D_{f}(p,u_{n})\to 0$ as $n\to\infty$. Moreover, $\lim_{n\to\infty}\|u_{n}-y_{n}\|=0$ and thus $\lim_{n\to\infty}\|\nabla f(u_{n})-\nabla f(y_{n})\|=0$.
Since $u_{n}\to q$ as $n\to\infty$, we have $y_{n}\to q$ as $n\to\infty$.

Here, we prove that $q\in GMEP(\Theta)$. 
It follows form \eqref{eqt33} and $$D_{f}(p,y_{n})-D_{f}(p,u_{n})\to 0$$ that 
\begin{equation}
\label{eqt314}
D_f(u_{n},y_n)\to 0.
\end{equation}
Moreover, from \eqref{eqt314}, we also have that
\begin{equation}
\label{eqt318}
\|\nabla f(u_{n})-\nabla f(y_n)\|\to 0.
\end{equation}
Also, consider that $u_{n}=Res_{\Theta,\varphi,\Psi}^{f}(y_{n})$, so we have
$$\Theta(u_{n},y)+\langle \Psi y_{n},y-u_{n}\rangle+\varphi(y)+\langle \nabla u_{n}-\nabla y_{n},y-u_{n}\rangle\geq \varphi(u_{n}),$$ 
for all $y\in C.$\\
From ($A_{2}$), we have
$$\Theta(y,u_{n})\leq -\Theta(u_{n},y)\leq \langle \Psi y_{n},y-u_{n}\rangle+ \varphi(y)-\varphi(u_{n})+\langle \nabla u_{n}-\nabla y_{n},y-u_{n}\rangle,$$
for all $y\in C.$\\
Hence, 
$$\Theta(y,u_{n})\leq \langle \Psi y_{n},y-u_{n}\rangle+\varphi(y)- \varphi(u_{n})+\langle \nabla u_{n}-\nabla y_{n},y-u_{n}\rangle,$$
for $y\in C.$\\
Since $u_{n}\to q$ and \eqref{eqt318}, from continuity of $\Psi$ and weak lower semicontinuity of $\varphi$ and $\Theta (\cdot,\cdot)$ in the second variable $y$, we also have
$$\Theta (y,q)+\langle \Psi q,q-y\rangle+\varphi(q)-\varphi(y)\leq0,$$
for all $y\in C.$\\
For $t$ with $0\leq t\leq 1$ and $y\in C$, let $y_{t}=ty+(1-t)q$. Since $y\in C$ and $q\in C$ we have $y_{t}\in C$ and hence $\Theta(y_{t},q)+\langle \Psi q,q-y_{t}\rangle+\varphi(q)-\varphi(y_{t})\leq 0$. So, we have
\begin{eqnarray*}
0&=&\Theta(y_{t},y_{t})+\langle \Psi q,y_{t}-y_{t}\rangle+\varphi(y_{t})-\varphi(y_{t})\\
&\leq& t\Theta(y_{t},y)+(1-t)\Theta(y_{t},q)+t\langle \Psi q,y-y_{t}\rangle+(1-t)\langle \Psi q,q-y_{t}\rangle\\&&+t\varphi(y)+(1-t)\varphi(q)-\varphi(y_{t})\\
&\leq& t[\Theta(y_{t},y)+\langle \Psi q,y-y_{t}\rangle+\varphi(y)-\varphi(y_{t})].
\end{eqnarray*}
 Therefore, $\Theta(y_{t},y)+\langle \Psi q,y-y_{t}\rangle+\varphi(y)-\varphi(y_{t})\geq0$. Then, we have 
 $$\Theta(q,y)+\langle \Psi q,y-q\rangle+\varphi(y)-\varphi(q)\geq0,$$
for all $y\in C.$ Hence, we have $q\in GMEP(\Theta)$.\\
 
Now, we  prove that $q\in F(T)$. Note that
\begin{eqnarray*}
&&\|\nabla f(x_{n})-\nabla f(y_{n})\|=\|\nabla f(x_{n})-\nabla f(\nabla f^{*}(\alpha_{n}\nabla f(x_{0})+(1-\alpha_{n})\nabla f(z_{n})))\|\\
&&=\|\nabla f(x_{n})-(\alpha_{n}\nabla f(x_{0})+(1-\alpha_{n})\nabla f(z_{n}))\|\\
&&=\|\alpha_{n}(\nabla f(x_{n})-\nabla f(x_{0}))+(1-\alpha_{n})(\nabla f(x_{n})-\nabla f(z_{n}))\|\\
&&=\|\alpha_{n}(\nabla f(x_{n})-\nabla f(x_{0}))+(1-\alpha_{n})(\nabla f(x_{n})-\nabla f (\nabla f^{*}(\beta_{n}\nabla f(T(x_{n}))\\\
&&+(1-\beta_{n})\nabla f(x_{n}))))\|\\
&&=\|\alpha_{n}(\nabla f(x_{n})-\nabla f(x_{0}))+(1-\alpha_{n})\beta_{n}(\nabla f(x_{n})-\nabla f(T(x_{n})))\|\\
&&\geq (1-\alpha_{n})\beta_{n}\|\nabla f(x_{n})-\nabla f(T(x_{n}))\|-\alpha_{n}\|\nabla f(x_{n})-\nabla f(x_{0})\|.
\end{eqnarray*}
This implies that
\begin{equation}\label{mahnaz4}
(1-\alpha_{n})\beta_{n}\|\nabla f(x_{n})-\nabla f(T(x_{n}))\|\leq \alpha_{n}\|\nabla f(x_{n})-\nabla f(x_{0})\|+\|\nabla f(x_{n})-\nabla f(y_{n})\|.
\end{equation}
Letting $n\to\infty$ in the above inequality, it follows from $\liminf_{n\to\infty}(1-\alpha_{n})\beta_{n}>0$ and $\lim_{n\to\infty}\alpha_{n}=0$ that
$$\lim_{n\to\infty}\|\nabla f(x_{n})-\nabla f(T(x_{n}))\|=0.$$
So, we have $\lim_{n\to\infty}\|x_{n}-T(x_{n})\|=0$. This together with $x_{n}\to q$ implies that $q\in \widetilde{F}(T)$. Since $\widetilde{F}(T)=F(T)$, we have $q\in F(T)\cap GMEP(\Theta)$. Therefore, the sequence $\{x_{n}\}$ converges strongly to a point $q\in F(T)\cap GMEP(\Theta)$.

Finally, we prove that $q=x^\dagger =proj^{f}_{F(T)\cap GMEP(\Theta)}(x_{0})$. Since 
$$x^\dagger =proj^{f}_{F(T)\cap GMEP(\Theta)}(x_{0})\in F(T)\cap GMEP(\Theta)$$ it follows from $x_{n+1}=proj^{f}_{C_{n+1}\cap Q_{n+1}}x_{0}$ and $x^\dagger \in F(T)\cap GMEP(\Theta)\subset C_{n+1}\cap Q_{n+1}$ that
$$D_{f}(x_{n+1},x_{0})\leq D_{f}(x^\dagger ,x_{0}).$$
Hence by Lemma \ref{ecap}, we have $x_{n}\to x^\dagger $ as $n\to\infty$. Thus $q=x^\dagger $. Therefore, the sequence $\{x_{n}\}$ converges strongly to the point $$x^\dagger =proj^{f}_{F(T)\cap GMEP(\Theta)}x_{0}.$$ This completes the proof.
\end{proof}

Let $\varphi=\Psi=0$, then we have the result of \cite{aga} as follows:
\begin{corollary}
Let $E$ be a real reflexive Banach space, $C$ be a nonempty, closed and convex subset of $E$. Let $f:E\to \mathbb{R}$ be a coercive Legendre function which is bounded, uniformly Fr\'{e}chet differentiable and totally convex on bounded subsets of $E$, and $\nabla f^{*}$ be bounded on bounded subset of $E^{*}$. Let $T:C\to C$ be a weak Bregman relatively nonexpansive mapping, $\Theta:C\times C\to\mathbb{R}$ satisfying conditions ($A_{1}$)-($A_{4}$). Assume that $F(T)\cap EP(\Theta)$ is nonempty and bounded. Let $\{x_{n}\}$ be a sequence generated by
\begin{align}
&z_{n}=\nabla f^{*}(\beta_{n}\nabla f(T(x_{n}))+(1-\beta_{n})\nabla f(x_{n})), \nonumber\\
&y_{n}=\nabla f^{*}(\alpha_{n} \nabla f(x_{0})+(1-\alpha_{n})\nabla f(z_{n})),\nonumber\\
&u_{n}=Res_{\Theta}^{f}(y_{n}),\nonumber\\
&C_{n+1}=\{z\in C_{n}: D_{f}(z,u_{n})\leq \alpha_{n}D_{f}(z,x_{0})+(1-\alpha_{n})D_{f}(z,x_{n})\}, \nonumber\\
&Q_{n+1}=\{z\in Q_{n}: \langle\nabla f(x_{0})-\nabla f(x_{n}),z-x_{n}\rangle\leq 0\},\nonumber\\
&x_{n+1}= proj^{f}_{C_{n+1}\cap Q_{n+1}}x_{0}, \ \forall n\geq 0,
\end{align}
where $\{\alpha_{n}\}, \{\beta_{n}\}\subset (0,1)$ satisfying $\lim_{n\to\infty}\alpha_{n}=0$ and $\liminf_{n\to\infty}(1-\alpha_{n})\beta_{n}>0$. Let $x_{0}\in C$ chosen arbitrarily, $Q_{0}=C$ and $C_{0}=\{z\in C: D_{f}(z,u_{0})\leq D_{f}(z,x_{0})\}$. Then,
 $\{x_{n}\}$ converges strongly to $proj^{f}_{F(T)\cap EP(\Theta)}x_{0}$.
\end{corollary}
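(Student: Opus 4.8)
The plan is to obtain this corollary as the direct specialization of Theorem \ref{mt} to the case $\varphi\equiv 0$ and $\Psi\equiv 0$, so that no independent argument is required beyond checking that every hypothesis and every object of Theorem \ref{mt} degenerates correctly. First I would verify that the data $\varphi\equiv 0$ and $\Psi\equiv 0$ meet the standing assumptions of the theorem: the zero function is a real-valued, convex and lower semicontinuous functional on $C$, and the zero operator satisfies $\langle \Psi x-\Psi y,x-y\rangle=0\geq 0$ for all $x,y\in C$, hence is continuous and monotone. Thus all hypotheses imposed on $\varphi$ and $\Psi$ in Theorem \ref{mt} hold trivially, while the hypotheses on $E$, $C$, $f$, $T$, $\Theta$, $\{\alpha_n\}$ and $\{\beta_n\}$ are carried over verbatim.

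Next I would check that the two problem-dependent objects reduce as claimed. Substituting $\varphi=0$ and $\Psi=0$ into the definition \eqref{nres} of the mixed resolvent collapses it to
\begin{equation*}
Res_{\Theta}^{f}(x)=\{z\in C:\ \Theta(z,y)+\langle \nabla f(z)-\nabla f(x),y-z\rangle\geq 0,\ \forall y\in C\},
\end{equation*}
which is exactly the resolvent $Res_{\Theta}^{f}$ appearing in the corollary's scheme. Correspondingly, as already observed in the Introduction, the generalized mixed equilibrium problem \eqref{eq11} degenerates to the equilibrium problem \eqref{eq15}, so that $GMEP(\Theta)$ coincides with $EP(\Theta)$, and Lemma \ref{nv} continues to apply with $Res_{\Theta}^{f}$ in place of $Res_{\Theta,\varphi,\Psi}^{f}$; in particular $Res_{\Theta}^{f}$ is single-valued with full domain, is BFNE, has fixed-point set $EP(\Theta)$, and satisfies the key inequality $D_f(p,Res_{\Theta}^{f}(x))+D_f(Res_{\Theta}^{f}(x),x)\leq D_f(p,x)$.

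With these identifications in hand, the iterative scheme of the corollary is word for word the scheme \eqref{main} of Theorem \ref{mt}, and the assumption that $F(T)\cap EP(\Theta)$ is nonempty and bounded is precisely the boundedness hypothesis of the theorem. I would therefore simply invoke Theorem \ref{mt}, which yields that $\{x_n\}$ converges strongly to $proj^{f}_{F(T)\cap GMEP(\Theta)}x_0=proj^{f}_{F(T)\cap EP(\Theta)}x_0$. I do not anticipate a genuine obstacle here; the only point demanding a moment's care is confirming that the degenerate resolvent and the degenerate solution set really coincide with their equilibrium-problem counterparts, after which the conclusion follows immediately from the theorem.
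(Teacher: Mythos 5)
Your proposal is correct and is exactly the paper's own route: the paper obtains this corollary by setting $\varphi=\Psi=0$ in Theorem \ref{mt}, so that $Res^{f}_{\Theta,\varphi,\Psi}$ collapses to $Res^{f}_{\Theta}$ and $GMEP(\Theta)$ to $EP(\Theta)$. You merely spell out the routine verifications (triviality of the hypotheses on the zero function and zero operator, and the identification of the degenerate resolvent and solution set) that the paper leaves implicit.
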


\section{Numerical example}
In this section, we present the example illustrating the behaviour of the iterative algorithm presented in this paper. Moreover, we compare the number of iterations of the sequences generated by iteration \eqref{e1.6} and iteration \eqref{main}.
\begin{example}
	Let $E=\mathbb{R}$, $C=[-\frac{3}{2},0)$, and  $f:\mathbb{R}\to\mathbb{R}$ be defined by $f(x)=\frac{1}{2}x^2$. Let $T:C\to C$ be defined by $Tx=\frac{2}{3}x$, and the bifunction $H:C\times C\to\mathbb{R}$ defined by $H(x,y)=x(y-x)$ see (\cite{aga}, Theorem 3.1). Let  $\Theta:C\times C\to \mathbb{R}$ such that $\Theta(x,y)=x(y-x)=H(x,y)$ for all $x,y\in C$, $\varphi:C\to \mathbb{R}$ be defined by $\varphi(x)=x^2$ for all $x\in C$ and $\Psi:C\to E^*$ such that $\Psi(x)=\sin(x)$ for all $x\in C$. Set $\{\alpha_n\}=\{\frac{1}{n+3}\}$ and $\{\beta_n\}=\{0.99-\frac{1}{n+2}\}$ for all $n\geq 0$. 
	
	We observe that $f$ is a coercive Legendre function which is bounded, uniformly Fr\'{e}chet differentiable and totally convex on bounded subsets of $\mathbb{R}$ and $\nabla f(x)=x$.  Since $f^*(x^*)=\sup\{\langle x^*,x\rangle-f(x):x\in \mathbb{R}\}$, we obtain that $f^*(u)=\frac{1}{2}u^2$ and $\nabla f^*(u)=u$. Further, we observe that $T$ is a weak Bregman relatively nonexpansive mapping with $\tilde{F}(T)=\{0\}=F(T)$. We also observe that $\Theta$, $H$ satisfy conditions $(A_1) - (A_4)$ and $\varphi$, $\Psi$ are a convex function and a continuous monotone mapping, respectively. Moreover, we have $GMEP(\Theta)=\{0\}=EP(H)$. Let $\{x_n\}$ be generated by the iterations \eqref{e1.6} and \eqref{main}. Then the sequence $\{x_n\}$ converges strongly to $0$, where $proj_{F(T)\cap EP(H)}(x_0)=0=proj_{F(T)\cap GMEP(\Theta)}(x_0)$. 
	The Algorithm \eqref{e1.6} and Algorithm \eqref{main} are checked by using the stopping criterion $||x_n-x_{n+1}||<10^{-10}$.
\end{example}

\begin{table}[h]
	\caption{The numerical results for different initial values $x_0$}
	\vspace{0.2 cm}
	\centering
	{\renewcommand{\arraystretch}{1.5}
	\begin{tabular}{ c c c }
		\hline
		Initial point&  \multicolumn{2}{ c }{\hspace{1 cm}Average iterations} \\
		\cline{2-3}
		$x_0$ & \hspace{1 cm}Algorithm \eqref{main} \hspace{1 cm}& \hspace{1 cm}Algorithm \eqref{e1.6} \\ 
		\hline
		-$\frac{1}{2}$ & \hspace{1 cm}1840206 & \hspace{1 cm}2001482  \\
		-1 & \hspace{1 cm}2921737 & \hspace{1 cm}3177798 \\
		$-\frac{3}{2}$ & \hspace{1 cm}3828937 & \hspace{1 cm}4164504  \\
		\hline
	\end{tabular}}
\end{table}
\begin{figure}[h]
	\caption{The numerical results for the generalized mixed equilibrium problem and the equilibrium problem}
	\label{fig:figure1}
	\includegraphics[height=8cm]{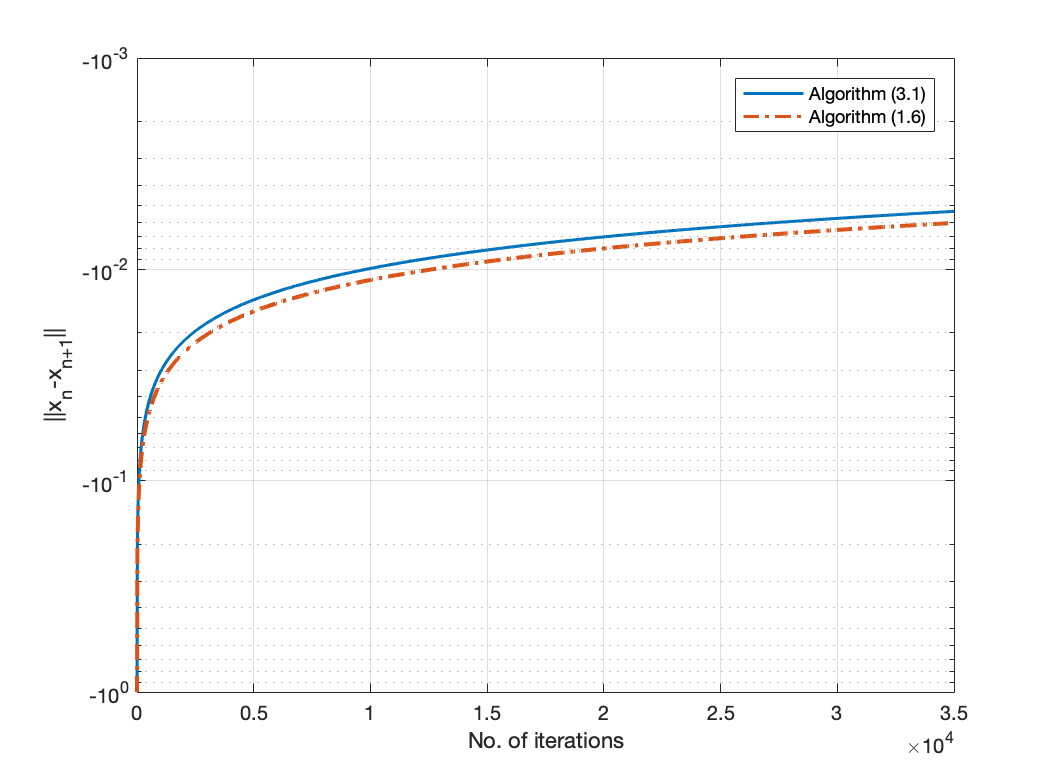}
\end{figure}

\section*{Acknowledgments}
The first author is supported by the Talented Young Scientist Program of Ministry of Science and Technology
of China (Iran-19-001).

\end{document}